\newtheorem{theorem}{Theorem}[section]
\newtheorem{corollary}[theorem]{Corollary}
\newtheorem{claim}[theorem]{Claim}
\newtheorem{definition}[theorem]{Definition}
\newtheorem*{remark}{Remark}
\newtheorem{example}[theorem]{Example}
\newtheorem*{result}{Result}
\newtheorem*{proof}{Proof}
\newcommand{\BC}{\mathbb{C}}
\newcommand{\fg}{\mathfrak{g}}
\newcommand{\fh}{\mathfrak{h}}
\newcommand{\fn}{\mathfrak{n}}
\newcommand{\SL}{\mathfrak{sl}}
\newcommand{\GL}{\mathfrak{gl}}
\title{New approaches to $\GL_N$ weight system}
\author{Zhuoke Yang\thanks{Higher School of Economics,
partially supported by International Laboratory of Cluster Geometry NRU HSE, RF Government grant,
ag. № 075-15-2021-608 dated 08.06.2021}}
\date{}
\begin{document}
\maketitle
Keyword: weight system, finite type invariants, chord diagram

Mathematics Subject Classification: 57K16,57M15
\begin{abstract}
The present paper has been motivated by an aspiration for understanding the weight system corresponding to the Lie algebra $\GL_N$. The straightforward approach to computing the values of a Lie algebra weight system on a general chord diagram amounts to elaborating calculations in the noncommutative universal enveloping algebra, in spite of the fact that the result belongs to the center of the latter. The first approach is based on a suggestion due to M. Kazarian to define an invariant of permutations taking values in the center of the universal enveloping algebra of $\GL_N$. The restriction of this invariant to involutions without fixed points (such an involution determines a chord diagram) coincides with the value of the $\GL_N$ -weight system on this chord diagram. We describe the recursion allowing one to compute the $\GL_N$ -invariant of permutations and demonstrate how it works in a number of examples. The second approach is based on the Harish-Chandra isomorphism for the Lie algebras $\GL_N$. This isomorphism identifies the center of the universal enveloping algebra $\GL_N$ with the ring $\Lambda^*(N)$ of shifted symmetric polynomials in N variables. The Harish-Chandra projection can be applied separately for each monomial in the defining polynomial of the weight system; as a result, the main body of computations can be done in a commutative algebra, rather than noncommutative one.
\end{abstract}
\newpage
\tableofcontents

\section{List of Symbols}

\begin{tabular}{cp{0.8\textwidth}}
  $\fg, \langle\cdot,\cdot\rangle$ & a Lie algebra endowed with a nondegenerate invariant bilinear product\\
  $\GL_N$ & general linear Lie algebra; consists of all $N\times N$ matrices with the commutator serving as the Lie bracket \\
  $\SL_N$ & special linear Lie algebra; consists of all $N\times N$ trace-free matrices with the commutator serving as the Lie bracket \\
  $d$ & dimension of Lie algebra; specifically, for $\GL_N$, $d=N^2$ \\
  $D$ & a chord diagram \\
  $n$ & the number of chords in a chord diagram  \\
  $K_n$ & the chord diagram with $n$ chords any two of which intersect one another\\
  $\pi$ & the projection to the subspace of primitive elements in the Hopf algebra of chord diagrams
  whose kernel is the subspace of decomposable elements\\
  $C_1,\cdots,C_N$ & Casimir elements in $U(\GL_N)$\\
  $w$ & a weight system\\
  $w_\fg$ & the Lie algebra weight system associated to a Lie algebra $\fg$\\
  $\bar{w}_\fg$ & $w_\fg(\pi(\cdot))$; the composition of the Lie algebra weight system $w_\fg$
  with the projection~$\pi$ to the subspace of primitives\\
  $\sigma$ & a permutation\\
  $m$ & the number of permutated elements; e.g. for the permutation determined by a chord diagram, $m=2n$\\
  $G(\sigma)$ & the digraph of the permutation~$\sigma$\\
  $\Lambda^*(N)$ & the algebra of shifted symmetric polynomials in~$N$ variables\\
  $\phi$ & the Harish–Chandra projection\\
  $p_1,\cdots,p_N$ & shifted power sum polynomials\\
\end{tabular}\\

\newpage
\section{Introduction}

In V.~A.~Vassiliev's theory of finite type knot invariants,
a weight system can be associated to each such invariant.
A weight system is a function on chord diagrams satisfying so-called
$4$-term relations.

In the opposite direction, according to a Kontsevich theorem,
to each weight system taking values in a field of characteristic~$0$,
a finite type knot invariant can be associated in a canonical way.
This makes studying weight systems an important part of knot theory.

There is a number of approaches to constructing weight systems. In particular,
a huge class of weight systems can be constructed from metrized finite dimensional Lie algebras.
The present paper has been motivated by an aspiration for understanding the weight
system corresponding to the Lie algebra~$\GL_N$.

The straightforward approach to computing the values of a Lie algebra weight system
on a general chord diagram amounts to elaborating calculations in the noncommutative
universal enveloping algebra, in spite of the fact that the result belongs to the
center of the latter. This approach is rather inefficient even for the simplest
noncommutative Lie algebra $\SL_2$, whose weight system is associated to the
knot invariant known as the colored Jones polynomial. For this Lie algebra, however, there is
a recurrence relation due to S.~Chmutov and A.~Varchenko~\cite{ChV},
and numerous computations have been done using it, see e.g.~\cite{F1,F2,Za}.
In particular, recently, values of the $\SL_2$-weight system have been
computed on certain nontrivial infinite families of chord diagrams.

Much less is known about other Lie algebras; for them, explicit answers have been computed
only for chord diagrams of very small order or for simple families
of chord diagrams, see~\cite{ZY}. In particular, no recurrence similar
to the Chmutov--Varchenko one exists (with the exception of the Lie superalgebra $\GL_{1|1}$,
see~\cite{FKV,ChL}). The goal of the present paper is to provide
two new ways to compute the values of the $\GL_N$ weight system.

The first approach is based on a suggestion due to M.~Kazarian
to define an invariant of permutations taking values in the center
of the universal enveloping algebra of $\GL_N$. The restriction of this invariant
to involutions without fixed points (such an involution determines a chord
diagram) coincides with the value of the $\GL_N$-weight system on this chord diagram.
We describe the recursion allowing one to compute the $\GL_N$-invariant
of permutations and demonstrate how it works in a number of examples.

For~$N'<N$, the center of the universal enveloping algebra of $\GL_{N'}$
is naturally embedded into that of $\GL_N$, and the $\GL_N$-weight system
is stable: its value on a permutation is a universal polynomial.
The recursion we describe allows one to compute this polynomial
simultaneously for all~$N$.

The calculations of the highers homogeneous part of the universal $\GL_N$ weight system in this terms of Casimir elements for some special primitive elements given by open Jacobi diagrams were the central part of the lower estimate for the dimension of the Vassiliev knot invariants in \cite{CD,Dasbach} (see also \cite[\S 14.5.4]{Chmutov2012}).

The second approach is based on the Harish-Chandra isomorphism for the Lie
algebras~$\GL_N$. This isomorphism identifies the center of the universal enveloping
algebra $\GL_N$ with the ring $\Lambda^*(N)$ of shifted symmetric polynomials
in~$N$ variables. The Harish-Chandra projection can be applied separately for
each monomial in the defining polynomial of the weight system;
as a result, the main body of computations can be done in a commutative
algebra, rather than noncommutative one.

The paper is organized as follows.
In Sec.~\ref{sec:def}, we recall the construction of Lie algebra weight systems.
In Sec.~\ref{sec:rec}, we
describe an extension of the $\GL_N$-weight system to arbitrary permutations
and a recursion to computing its values on permutations.
In Sec.~\ref{sec:sym}, we apply,
for the Lie algebras $\GL_N$, the Harish-Chandra
isomorphism  to develop one more algorithm for computing the corresponding weight system.
We compare the results with those obtained by the previous method.
In Sec.~\ref{sec:ha}, we recall the Hopf algebra structure on the
space of chord diagrams modulo $4$-term relations, and
discuss the behaviour of the $\GL_N$-weight system with respect to this structure.

The author is grateful to M.~Kazarian and G.~Olshanskii for valuable suggestions,
and to S.~Lando for permanent attention.

\section{Definition of $\GL_N$ weight system}\label{sec:def}

Below, we use standard notions from the theory of finite order knot invariants;
see, e.g.~\cite{Chmutov2012}.

A {\it chord diagram\/} of order~$n$ is an oriented circle (called the \emph{Wilson loop}) endowed with~$2n$
pairwise distinct points split into~$n$ disjoint pairs, considered up to
orientation-preserving diffeomorphisms of the circle.


A~{\it weight system\/} is a function $w$ on chord diagrams satisfying the $4$-term
relation; see Fig.~\ref{fourtermrelation}.

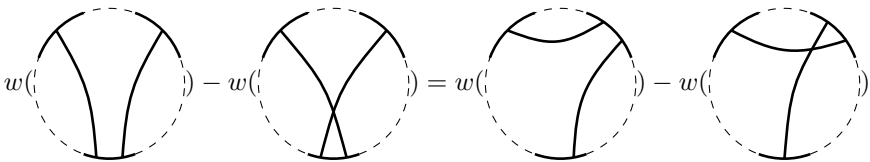
\begin{figure}[ht]
\[
w(\begin{tikzpicture}[baseline={([yshift=-.5ex]current bounding box.center)}]
	\draw[dashed] (0,0) circle (1);
	\draw[line width=1pt]  ([shift=( 20:1cm)]0,0) arc [start angle= 20, end angle= 70, radius=1];
	\draw[line width=1pt]  ([shift=(110:1cm)]0,0) arc [start angle=110, end angle=160, radius=1];
	\draw[line width=1pt]  ([shift=(250:1cm)]0,0) arc [start angle=250, end angle=290, radius=1];
	\draw[line width=1pt] (45:1) ..  controls (5:0.3) and (-40:0.3)  .. (280:1);
	\draw[line width=1pt] (135:1) ..  controls (175:0.3) and (220:0.3)  .. (260:1);
\end{tikzpicture})  -
w(\begin{tikzpicture}[baseline={([yshift=-.5ex]current bounding box.center)}]
	\draw[dashed] (0,0) circle (1);
	\draw[line width=1pt]  ([shift=( 20:1cm)]0,0) arc [start angle= 20, end angle= 70, radius=1];
	\draw[line width=1pt]  ([shift=(110:1cm)]0,0) arc [start angle=110, end angle=160, radius=1];
	\draw[line width=1pt]  ([shift=(250:1cm)]0,0) arc [start angle=250, end angle=290, radius=1];
	\draw[line width=1pt] (45:1) ..  controls (-5:0.1) and (-50:0.1)  .. (260:1);
	\draw[line width=1pt] (135:1) ..  controls (185:0.1) and (225:0.1)  .. (280:1);
\end{tikzpicture})  =
w(\begin{tikzpicture}[baseline={([yshift=-.5ex]current bounding box.center)}]
	\draw[dashed] (0,0) circle (1);
	\draw[line width=1pt]  ([shift=( 20:1cm)]0,0) arc [start angle= 20, end angle= 70, radius=1];
	\draw[line width=1pt]  ([shift=(110:1cm)]0,0) arc [start angle=110, end angle=160, radius=1];
	\draw[line width=1pt]  ([shift=(250:1cm)]0,0) arc [start angle=250, end angle=290, radius=1];
	\draw[line width=1pt] (35:1) ..  controls (0:0.3) and (-45:0.3)  .. (280:1);
	\draw[line width=1pt] (135:1) ..  controls (105:0.5) and (85:0.5)  .. (55:1);
\end{tikzpicture})  -
w(\begin{tikzpicture}[baseline={([yshift=-.5ex]current bounding box.center)}]
	\draw[dashed] (0,0) circle (1);
	\draw[line width=1pt]  ([shift=( 20:1cm)]0,0) arc [start angle= 20, end angle= 70, radius=1];
	\draw[line width=1pt]  ([shift=(110:1cm)]0,0) arc [start angle=110, end angle=160, radius=1];
	\draw[line width=1pt]  ([shift=(250:1cm)]0,0) arc [start angle=250, end angle=290, radius=1];
	\draw[line width=1pt] (55:1) ..  controls (5:0.1) and (-40:0.1)  .. (270:1);
	\draw[line width=1pt] (135:1) ..  controls (105:0.4) and (65:0.4)  .. (35:1);
\end{tikzpicture})
\]
\caption{$4$-term relation}
\label{fourtermrelation}
\end{figure}

In figures, the outer circle of the chord diagram is always assumed to be
oriented counterclockwise. Dashed arcs may contain ends of arbitrary sets
of chords, same for all the four terms in the picture.

\begin{definition}
The {\it product} of two chord diagrams $D_1$ and $D_2$ is defined by cutting and gluing the two circles as shown

\begin{tikzpicture}[baseline={([yshift=-.5ex]current bounding box.center)}]
	\draw (0,0) circle (1);
	\draw (1,0) -- (-1,0);
	\fill[black] (1,0) circle (1pt)
                 (-1,0) circle (1pt)
                 (60:1) circle (1pt)
                 (120:1) circle (1pt)
                 (240:1) circle (1pt)
                 (300:1) circle (1pt);
	\draw (60:1)  ..  controls (0,0.2) and (0,-0.2)  .. (300:1);
	\draw (120:1) ..  controls (0,0.2) and (0,-0.2)  .. (240:1);
\end{tikzpicture} $\times$
\begin{tikzpicture}[baseline={([yshift=-.5ex]current bounding box.center)}]
	\draw (0,0) circle (1);
	\draw (1,0) -- (-1,0);
	\fill[black] (1,0) circle (1pt)
                 (-1,0) circle (1pt)
                 (60:1) circle (1pt)
                 (120:1) circle (1pt)
                 (240:1) circle (1pt)
                 (300:1) circle (1pt);
	\draw (60:1)  ..  controls (0,0.2) and (0,-0.2)  .. (300:1);
	\draw (120:1) ..  controls (0,0.2) and (0,-0.2)  .. (240:1);
\end{tikzpicture} $=$
\begin{tikzpicture}[baseline={([yshift=-.5ex]current bounding box.center)}]
	\draw ([shift=( 20:1cm)]0,0) arc [start angle= 20, end angle= 340, radius=1];;
	\draw (0,1) -- (0,-1);
	\fill[black] (0,1) circle (1pt)
                 (0,-1) circle (1pt)
                 (30:1) circle (1pt)
                 (150:1) circle (1pt)
                 (210:1) circle (1pt)
                 (330:1) circle (1pt);
	\draw (30:1)  ..  controls (0.2,0) and (-0.2,0)  .. (150:1);
	\draw (210:1) ..  controls (-0.2,0) and (0.2,0)  .. (330:1);

	\draw[xshift=2.5cm] ([shift=( 200:1cm)]0,0) arc [start angle= 200, end angle=520, radius=1];
	\draw[xshift=2.5cm] (30:1) -- (210:1);
	\fill[xshift=2.5cm,black] (0,1) circle (1pt)
                 (0,-1) circle (1pt)
                 (30:1) circle (1pt)
                 (150:1) circle (1pt)
                 (210:1) circle (1pt)
                 (330:1) circle (1pt);
	\draw[xshift=2.5cm] (150:1)  ..  controls (190:0.2) and (230:0.2)  .. (0,-1);
	\draw[xshift=2.5cm] (0,1) ..  controls (50:0.2) and (10:0.2)  .. (330:1);
	\draw (20:1) -- ($(20:1)+(0.62,0)$);
	\draw (-20:1) -- ($(-20:1)+(0.62,0)$);
\end{tikzpicture} $=$
\begin{tikzpicture}[baseline={([yshift=-.5ex]current bounding box.center)}]
	\draw (0,0) circle (1);
	\draw (  0:1) -- (-90:1)
          ( 30:1) -- (-30:1)
          ( 60:1) -- (-60:1)
          ( 90:1) -- (150:1)
          (120:1) -- (210:1)
          (180:1) -- (240:1);
	\fill[black] (  0:1) circle (1pt)
                 ( 30:1) circle (1pt)
                 ( 60:1) circle (1pt)
                 ( 90:1) circle (1pt)
                 (120:1) circle (1pt)
                 (150:1) circle (1pt)
                 (180:1) circle (1pt)
                 (210:1) circle (1pt)
                 (240:1) circle (1pt)
                 (270:1) circle (1pt)
                 (300:1) circle (1pt)
                 (330:1) circle (1pt);
\end{tikzpicture}.

Modulo $4$-term relations, the product is well-defined, see\cite[\S 4.4.3]{Chmutov2012}.
\end{definition}

Given a Lie algebra $\fg$ equipped with a non-degenerate invariant bilinear form, one can construct a weight system with values in the center of its universal enveloping algebra $U(\fg)$.
This is the form M. Kontsevich~\cite{kon1993} gave to a construction due to D. Bar-Natan~\cite{bar1995vassiliev}.
Kontsevich’s construction proceeds as follows.

\begin{definition}[Universal Lie algebra weight system] Let $\fg$ be a metrized Lie algebra over $\mathbb{R}$ or $\mathbb{C}$, that is, a Lie algebra with an ad-invariant non-degenerate bilinear form $\langle\cdot,\cdot\rangle$.
	Let~$d$ denote the dimension of~$\fg$. Choose a basis $e_1 ,\dots,e_d$ of $\fg$ and let $e_1^* ,\dots,e_d^*$ be the dual basis with respect to the form $\langle\cdot,\cdot\rangle$, $\langle e_i,e_j^*\rangle=\delta_{ij}$, where $\delta$ is the Kronecker delta.

Given a chord diagram $D$ with $n$ chords, we first choose a base point on the circle, away from the ends of the chords of $D$. This gives a linear order on the endpoints of the chords, increasing in the positive direction of the Wilson loop. Assign to each chord $a$ an index, that is, an integer-valued variable, $i_a$. The values of $i_a$ will range from $1$ to $d$, the dimension of the Lie algebra. Mark the first endpoint of the chord~$a$ with the symbol $e_{i_a}$ and the second endpoint with $e_{i_a}^{*}$.

Now, write the product of all the $e_{i_a}$ and all the $e_{i_a}^{*}$ , in the order in which they appear on the Wilson loop of $D$, and take the sum of the $d^n$ elements of the universal enveloping algebra $U(\fg)$ obtained by substituting all possible values of the indices $i_a$ into this product. Denote by $w_\fg (D)$ the resulting element of $U(\fg)$.
\end{definition}

\begin{claim}~{\rm\cite{kon1993}} The function $w_\fg :D\mapsto w_\fg (D)$ on chord diagrams has the following properties:
\begin{enumerate}
\item the element $w_\fg (D)$ does not depend on the choice of the base point on the diagram;
\item it does not depend on the choice of the basis ${e_i }$ of the Lie algebra~$\fg$;
\item its image belongs to the ad-invariant subspace
\[
	U(\fg)^{\fg}=\{x\in U(\fg)|xy=yx \text{ for all } y\in \fg\}=ZU(\fg);
\]
\item it is multiplicative, $w_\fg (D_1D_2)=w_\fg (D_1)w_\fg (D_2)$ for any pair of chord diagrams $D_1,D_2$;
\item this map from chord diagrams to $ZU(\fg)$ satisfies the 4-term relations.
\end{enumerate}
\end{claim}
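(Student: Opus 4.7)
My plan rests on introducing the \emph{split Casimir tensor} $c = \sum_{i=1}^{d} e_i \otimes e_i^* \in \fg \otimes \fg$ and first recording two basic facts about it. One, $c$ is independent of the chosen basis, since it is the image of the identity under the isomorphism $\mathrm{End}(\fg) \cong \fg \otimes \fg^* \cong \fg \otimes \fg$ induced by the form. Two, ad-invariance of the form translates into $\fg$-invariance of $c$, i.e., $\sum_i [y, e_i] \otimes e_i^* + \sum_i e_i \otimes [y, e_i^*] = 0$ for all $y \in \fg$; and symmetry of the form yields $c = \sum_i e_i^* \otimes e_i$, so in particular $\sum_i [e_i, e_i^*] = 0$.

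Granted these facts, properties (1)--(4) will be essentially bookkeeping. Property (2) is immediate, because $w_\fg(D)$ is built from one copy of $c$ per chord, with the two tensor factors placed at the two endpoints. For (3), I would fix $y \in \fg$ and expand $[y, w_\fg(D)]$ using the derivation property of $\mathrm{ad}(y)$; the result is a sum of terms, one per endpoint, and collecting the two contributions of each chord $a$ produces, after summing over the index $i_a$, exactly the failure of $c$ to be $\fg$-invariant, which is zero. For (1), it suffices to consider shifting the basepoint past a single endpoint; this is a cyclic shift of one letter in the word, and the resulting change is a commutator which, once summed over the index of the shifted chord, vanishes by the same ad-invariance together with $\sum_i [e_i, e_i^*] = 0$ to handle the self-endpoint contribution. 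Property (4) will follow by placing the basepoints of $D_1$ and $D_2$ at the seam where the two circles are cut and glued: the product word for $D_1 D_2$ is then literally the concatenation of the two factor words, so its sum in $U(\fg)$ equals $w_\fg(D_1)\, w_\fg(D_2)$.

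The main obstacle will be property (5), the 4-term relation. The four diagrams differ only locally: one endpoint of a chord $a$ is moved past one endpoint of a chord $b$ in the four combinations indicated in Fig.~\ref{fourtermrelation}. My approach will be to write out the four corresponding words in $U(\fg)$, factor out the parts that are common, and verify that the alternating sum factors through an expression in which the chord $a$ contributes a commutator with a letter of chord $b$. Concretely, I expect the difference of the first two terms to equal a sum over $i_a, i_b$ of words in which $[e_{i_a}, e_{i_b}]$ (or its dual variant) appears in place of $e_{i_b}$, and similarly for the last two, so that the $\fg$-invariance of $c$, used once, matches the two sides. The delicate part will be the careful bookkeeping of which endpoint is $e$ versus $e^*$ in each of the four pictures and of the attendant signs; once the local model is pinned down, the algebraic content is precisely the ad-invariance of $c$.
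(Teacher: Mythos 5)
The paper offers no proof of this Claim: it is quoted from Kontsevich~\cite{kon1993} (the construction goes back to Bar-Natan~\cite{bar1995vassiliev}; a full proof is in~\cite{Chmutov2012}), so there is no internal argument to compare against. Your proposal reconstructs the standard proof, and it is essentially correct: everything does reduce to the basis-independence and the $\fg$-invariance of the split Casimir $c=\sum_i e_i\otimes e_i^*$, together with $\sum_i[e_i,e_i^*]=0$, and your treatments of (2), (3), (4) and (5) are exactly the textbook arguments (for (5), the difference of the first two terms of Fig.~\ref{fourtermrelation} places $[e_{i_a},e_{i_b}]$ at one endpoint of the chord $b$, the difference of the last two places the corresponding commutator at the other endpoint, and invariance of $c$ in the two $b$-slots identifies the two sums). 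The one step whose mechanism you state slightly inaccurately is (1): expanding the cyclic-shift discrepancy $\sum[x_1,x_2\cdots x_{2n}]$ by the derivation property gives one term per remaining endpoint, and these should be grouped chord by chord. The two terms coming from a chord $b\neq a$ cancel for each \emph{fixed} value of $i_a$, by invariance of $c$ in the $b$-slots under $\mathrm{ad}(e_{i_a})$; only the single self-term needs the summation over $i_a$ and the identity $\sum_i[e_i,e_i^*]=0$. So it is not the summation over the shifted chord's index that kills the cross terms, but the invariance of the \emph{other} chords' Casimir tensors; with that grouping made explicit the argument closes, and the rest is the sign and $e$-versus-$e^*$ bookkeeping you already flag.
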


Consider the Lie algebra $\GL_N $ of all $N \times N$ matrices. Fix the trace of the product of matrices as the preferred ad-invariant form: $\langle x,y\rangle = \text{Tr}(xy)$.
The Lie algebra $\GL_N $ is linearly spanned by matrix units $E_{ij}$ having $1$ on the intersection of $i$~th row with $j~$th column and~$0$ elsewhere, $i,j=1,\dots,N$. We have $\langle E_{ij},E_{kl}\rangle = \delta_{il}\delta_{jk}$. Therefore, the duality between $\GL_N $ and $\GL_N^*$ defined by $\langle \cdot,\cdot\rangle$ is given by the formula $E_{ij}^*= E_{ji}$.
The commutation relations for $\GL_N$ have the form
\begin{eqnarray}\label{e1}
[E_{kl},E_{ji}]=E_{kl}E_{ji}-E_{ji}E_{kl}=\delta_{lj}E_{ki}-\delta_{ik}E_{jl}.
\end{eqnarray}

Now, the straightforward computation of the value of the $\GL_N$ weight system
looks like follows.

\begin{example}
For a chord diagram $K_1$ with a single chord, we have
$$
w_{\GL_N}(K_1)=\sum\limits_{i,j=1}^{N}E_{ij}E_{ji}.
$$
We denote this element by~$C_2\in ZU(\GL_N)$ and call the \emph{second Casimir}.
Similarly, $\sum\limits_{i=1}^{N}E_{ii}=C_1$, and, more generally,
$$
C_k=\sum\limits_{i_1,i_2,\dots, i_k=1}^{N}E_{i_1i_2}E_{i_2i_3}\dots E_{i_ki_1}
$$
is the $k$~th\emph{ Casimir element} in $ZU(\GL_N)$.
\end{example}

The center $ZU(\GL_N)$ is isomorphic to the ring
of polynomials in the Casimir elements $C_1,\dots,C_N$: $ZU(\GL_N)=\BC[C_1,\dots,C_N]$,
see \cite{Zhe}. The higher Casimir elements $C_{N+1},C_{N+2},\dots$ can be represented as polynomials 
in~$C_1,\dots,C_N$. The value $w_{\GL_N}(D)$
of the $\GL_N$-weight system on a chord diagram~$D$ with~$n$ chords is a polynomial
in $C_1,\dots,C_n$.

\begin{example}
For the chord diagram, which we denote by~$K_2$, since its intersection graph is $K_2$,
the complete graph on~$2$ vertices, we have

\begin{tikzpicture}[baseline={([yshift=-.5ex]current bounding box.center)}]
	\draw (0,0) circle (1);
	\draw ( 45:1) -- (225:1)
          (135:1) -- (315:1);
	\fill[black] ( 45:1) circle (1pt) node[right] {$E_{ij}$}
                 (135:1) circle (1pt) node[left] {$E_{kl}$}
                 (225:1) circle (1pt) node[left] {$E_{ji}$}
                 (315:1) circle (1pt) node[right] {$E_{lk}$};
\end{tikzpicture}
\ \ \ \ \ $w_{\GL_N}(K_2)=\sum\limits_{i,j,k,l=1}^{N}E_{ij}E_{kl}E_{ji}E_{lk}.$ \\
Using the commutation relations~(\ref{e1}) we obtain
\begin{align*}
	w_{\GL_N}(K_2)&=\sum_{i,j,k,l=1}^{N}E_{ij}E_{kl}E_{ji}E_{lk} \\
	&=\sum_{i,j,k,l=1}^{N}E_{ij}E_{ji}E_{kl}E_{lk}+\sum_{i,j,k,l=1}^{N}\delta_{lj}E_{ij}E_{ki}E_{lk}-\sum_{i,j,k,l=1}^{N}\delta_{ik}E_{ij}E_{jl}E_{lk}\\
	&=C_2^2+\sum_{i,j,k=1}^{N}E_{ij}E_{ki}E_{jk}-\sum_{i,j,l=1}^{N}E_{ij}E_{jl}E_{li}\\
	&=C_2^2+\sum_{i,j,k=1}^{N}E_{ij}[E_{ki},E_{jk}]\\
	&=C_2^2+\sum_{i,j,k=1}^{N}\delta_{ij}E_{ij}E_{kk}-\sum_{i,j,k=1}^{N}\delta_{kk}E_{ij}E_{ji}\\
	&=C_2^2+C_1^2-NC_2.
\end{align*}
\end{example}

Even in this simple example, the straightforward computation includes a lot of steps.
A much more efficient algorithm is suggested in the next section.


\section{The $\GL$ weight system for permutations}
\label{sec:rec}

There is no recurrence relation for the weight system $w_{\GL_N}$ we know about. Instead,
following the suggestion by M.~Kazarian, we interpret an arc diagram
as an involution without fixed points on the set of its ends and
extend the function  $w_{\GL_N}$ to arbitrary permutations of any number of permutated elements. For permutations,
in contrast to chord diagrams, such a recurrence relation could be given.

For a permutation $\sigma\in S_m$, set

\[
	w_{\GL_N}(\sigma)=\sum_{i_1,\cdots,i_m=1}^N E_{i_1i_{\sigma(1)}}E_{i_2i_{\sigma(2)}}\cdots E_{i_mi_{\sigma(m)}}\in U(\GL_N).
\]
We claim that
\begin{itemize}
	\item $w_{\GL_N}$ lies in the center of $U(\GL_N)$.
	\item this element is invariant under conjugation by  a cyclic permutation:

		$w_{\GL_N}(\sigma)=\sum_{i_1,\cdots,i_m=1}^N E_{i_2i_{\sigma(2)}}\cdots E_{i_mi_{\sigma(m)}}E_{i_1i_{\sigma(1)}}$.
\end{itemize}
	
For example, the standard generator 
$$C_m=\sum^N_{i_1,\cdots,i_m=1}E_{i_1i_2}E_{i_2i_3}\cdots E_{i_{m-1}i_m}E_{i_mi_1}$$
corresponds to the cyclic permutation $1\mapsto2\mapsto\cdots\mapsto m\mapsto1\in S_m$.


On the other hand, a chord diagram with $n$ chords can be considered as an involution without fixed points on a set of $m=2n$ elements. The value of $w_{\GL_N}$ on the corresponding involution is equal to the value of the $\GL_N$ weight system on the corresponding chord diagram.
\begin{example}
For the chord diagram $K_n=$
\begin{tikzpicture}[baseline={([yshift=-.5ex]current bounding box.center)}]
			\draw[->,thick] (-2,0)--(2,0);
		\draw (-1.8,0) ..  controls (-1,.5) ..(.2,0);
		\draw (-1.4,0) ..  controls (-.6,.5) ..(.6,0);
		\draw (-.2,0) ..  controls (1,.5) ..(1.8,0);
		\fill[black] (-1.8,0) circle (1pt) node[below] {\tiny 1};
		\fill[black] (-1.4,0) circle (1pt) node[below] {\tiny 2};
		\fill[black] (-0.2,0) circle (1pt) node[below] {\tiny n};
		\fill[black] ( .15,0) circle (1pt) node[below] {\tiny n+1};
		\fill[black] ( .6,0) circle (1pt)  node[below]{\tiny n+2};
		\fill[black] ( 1.8,0) circle (1pt) node[below] {\tiny 2n};
		\node[below] at (-.8,0) {$\cdots$};
		\node[below] at (1.2,0) {$\cdots$};
\end{tikzpicture}
we have
\begin{align*}
	w_{\GL_N}(K_n)&=\sum_{i_1,\cdots,i_{2n}=1}^N E_{i_1i_{n+1}}E_{i_2i_{n+2}}\cdots E_{i_ni_{2n}}E_{i_{n+1}i_1}E_{i_{n+2}i_2}\cdots E_{i_{2n}i_n}\\
  &=w_{\GL_N}(\left(1\ n+1)(2\ n+2)\cdots(n\ 2n)\right)
\end{align*}
\end{example}

\begin{definition}[digraph of the permutation]
	Let us represent  a permutation as an oriented graph.
The $m$ vertices of the graph correspond to the permuted elements.
They are ordered cyclically and are placed on a real line,
subsequently connected with horizontal arrows looking right and numbered
from left to right. The arc arrows
show the action of the permutation (so that each vertex is incident with exactly
one incoming and one outgoing arc edge).
	The digraph $G(\sigma)$ of a permutation $\sigma\in S_m$
consists of these~$m$ vertices and $m$ oriented edges, for example:
	\[
	G(\left(1\ n+1)(2\ n+2)\cdots(n\ 2n)\right)=\begin{tikzpicture}[baseline={([yshift=-.5ex]current bounding box.center)},decoration={markings, mark= at position .55 with {\arrow{stealth}}}]
		\draw[->,thick] (-2,0)--(2,0);
		\draw[blue,postaction={decorate}] (-1.8,0) ..  controls (-1,.5) ..(.2,0);
		\draw[blue,postaction={decorate}] (-1.4,0) ..  controls (-.6,.5) ..(.6,0);
		\draw[blue,postaction={decorate}] (-.2,0) ..  controls (1,.5) ..(1.8,0);
		\draw[blue,postaction={decorate}] (.2,0) ..  controls (-1,-.5) ..(-1.8,0);
		\draw[blue,postaction={decorate}] (.6,0) ..  controls (-.6,-.5) ..(-1.4,0);
		\draw[blue,postaction={decorate}] (1.8,0) ..  controls (1,-.5) ..(-.2,0);
		\fill[black] (-1.8,0) circle (1pt) node[below] {\tiny 1};
		\fill[black] (-1.4,0) circle (1pt) node[below] {\tiny 2};
		\fill[black] (-0.2,0) circle (1pt) node[below] {\tiny n};
		\fill[black] ( .15,0) circle (1pt) node[below] {\tiny n+1};
		\fill[black] ( .6,0) circle (1pt)  node[below]{\tiny n+2};
		\fill[black] ( 1.8,0) circle (1pt) node[below] {\tiny 2n};
		\node[below] at (-.8,0) {$\cdots$};
		\node[below] at (1.2,0) {$\cdots$};
\end{tikzpicture}
	\]
\end{definition}

\begin{example}
  The digraph of the Casimir element $C_m$, which corresponds to the cyclic permutation $1\mapsto2\mapsto\cdots\mapsto m\mapsto1\in S_m$,
   is the following one:
    \[
  G(\left(1\ 2\ 3\cdots m-1\ m)\right)=\begin{tikzpicture}[baseline={([yshift=-.5ex]current bounding box.center)},decoration={markings, mark= at position .55 with {\arrow{stealth}}}]
    \draw[->,thick] (-2,0)--(2,0);
    \draw[blue,postaction={decorate}] (-1.8,0) ..  controls (-1.5,.5) ..(-1.2,0);
    \draw[blue,postaction={decorate}] (-1.2,0) ..  controls (-.9,.5) ..(-.6,0);
    \draw[blue,postaction={decorate}] (1.2,0) ..  controls (1.5,.5) ..(1.8,0);
    \draw[blue,postaction={decorate}] (.6,0) ..  controls (.9,.5) ..(1.2,0);
    \draw[blue,postaction={decorate}] (1.8,0) ..  controls (0,-.5) ..(-1.8,0);
    \fill[black] (-1.8,0) circle (1pt) node[below] {\tiny 1};
    \fill[black] (-1.2,0) circle (1pt) node[below] {\tiny 2};
    \fill[black] (-.6,0) circle (1pt) node[below] {\tiny 3};
    \fill[black] (1.8,0) circle (1pt) node[below] {\tiny m};
    \fill[black] (1.2,0) circle (1pt) node[below] {\tiny m-1};
    \fill[black] (.6,0) circle (1pt) node[below] {\tiny m-2};
    \node[above] at (0,0) {$\cdots$};
\end{tikzpicture}
  \]
\end{example}

\begin{theorem}
The value of the $w_{\GL_N}$ invariant of permutations possesses the following properties:
\begin{itemize}
	\item for an empty graph (with no vertices) the value of $w_{\GL_N}$ is equal to $1$,
		$w_{\GL_N}(\textcircled{})=1$;
	\item $w_{\GL_N}$ is multiplicative with respect to concatenation of permutations;
	\item for a cyclic permutation {\rm(}with the cyclic order on the set of permuted elements  compatible with the permutation{\rm)},
 the value of $w_{\GL_N}$ is the standard generator,
		$w_{\GL_N}(1\mapsto2\mapsto\cdots\mapsto k\mapsto1)=C_k$.

\item {\rm(}\textbf{Recurrence Rule}{\rm)} For the graph of an arbitrary permutation $\sigma$ in~$S_m$,
and for any two neighboring elements $k,k+1$, of the permuted set $\{1,2,\dots,m\}$, we have
for the value of the $w_{\GL_N}$ weight system
\begin{equation*}
	\begin{tikzpicture}[baseline={([yshift=-.5ex]current bounding box.center)},decoration={markings, mark= at position .55 with {\arrow{stealth}}}]
		\draw[->,thick] (-1,0) --  (1,0);
		\fill[black] (-.3,0) circle (1pt) node[below] {\tiny k};
		\fill[black] (.3,0) circle (1pt) node[below] {\tiny k+1};
		\draw[blue,postaction={decorate}] (-.5,.8) -- (.3,0);
		\draw[blue,postaction={decorate}] (-.3,0) -- (.5,.8);
		\draw[blue,postaction={decorate}] (-.5,-.8) -- (-.3,0);
		\draw[blue,postaction={decorate}] (.3,0) -- (.5,-.8);
	\end{tikzpicture}-
	\begin{tikzpicture}[baseline={([yshift=-.5ex]current bounding box.center)},decoration={markings, mark= at position .55 with {\arrow{stealth}}}]
		\draw[->,thick] (-1,0) --  (1,0);
		\fill[black] (.3,0) circle (1pt) node[below] {\tiny k+1};
		\fill[black] (-.3,0) circle (1pt) node[below] {\tiny k};
		\draw[blue,postaction={decorate}] (-.5,.8) -- (-.3,0);
		\draw[blue,postaction={decorate}] (.3,0) -- (.5,.8);
		\draw[blue,postaction={decorate}] (-.5,-.8) -- (.3,0);
		\draw[blue,postaction={decorate}] (-.3,0) -- (.5,-.8);
	\end{tikzpicture}=
	\begin{tikzpicture}[baseline={([yshift=-.5ex]current bounding box.center)},decoration={markings, mark= at position .55 with {\arrow{stealth}}}]
		\draw[->,thick] (-1,0)  -- (1,0);
		\fill[black] (0,0) circle (1pt) node[above] {\tiny k'};
		\draw[blue,postaction={decorate}] (-.5,.8) ..controls (0,.4) .. (.5,.8);
		\draw[blue,postaction={decorate}] (-.5,-.8) -- (0,0);
		\draw[blue,postaction={decorate}] (0,0) -- (.5,-.8);
	\end{tikzpicture}-
	\begin{tikzpicture}[baseline={([yshift=-.5ex]current bounding box.center)},decoration={markings, mark= at position .55 with {\arrow{stealth}}}]
		\draw[->,thick] (-1,0) --  (1,0);
		\fill[black] (0,0) circle (1pt) node[below] {\tiny k'};
		\draw[blue,postaction={decorate}] (-.5,-.8) ..controls (0,-.4) .. (.5,-.8);
		\draw[blue,postaction={decorate}] (-.5,.8) -- (0,0);
		\draw[blue,postaction={decorate}] (0,0) -- (.5,.8);
	\end{tikzpicture}
\end{equation*}

In the diagrams on the left, two horizontally neighboring vertices
and the edges incident to them are depicted, while
on the right these two vertices are replaced with a single one;
the other vertices are placed somewhere on the line and their positions are the same on all
diagrams participating in the relations, but the numbers of the vertices to the right
of the latter are to be decreased by~$1$.

In particular, for the special case $\sigma(k+1)=k$, the recurrence looks like follows:
\begin{equation*}
	\begin{tikzpicture}[baseline={([yshift=-.5ex]current bounding box.center)},decoration={markings, mark= at position .55 with {\arrow{stealth}}}]
		\draw[->,thick] (-1,0) --  (1,0);
		\fill[black] (-.3,0) circle (1pt) node[below] {\tiny k};
		\fill[black] (.3,0) circle (1pt) node[below] {\tiny k+1};
		\draw[blue,postaction={decorate}] (-.5,.8) -- (.3,0);
		\draw[blue,postaction={decorate}] (-.3,0) -- (.5,.8);
		\draw[blue,postaction={decorate}] (.3,0) ..controls(0,-.3).. (-.3,0);
	\end{tikzpicture}-
	\begin{tikzpicture}[baseline={([yshift=-.5ex]current bounding box.center)},decoration={markings, mark= at position .55 with {\arrow{stealth}}}]
		\draw[->,thick] (-1,0) --  (1,0);
		\fill[black] (.3,0) circle (1pt) node[below] {\tiny k+1};
		\fill[black] (-.3,0) circle (1pt) node[below] {\tiny k};
		\draw[blue,postaction={decorate}] (-.5,.8) -- (-.3,0);
		\draw[blue,postaction={decorate}] (.3,0) -- (.5,.8);
		\draw[blue,postaction={decorate}] (-.3,0) ..controls(0,-.3).. (.3,0);
	\end{tikzpicture}=C_1\times
	\begin{tikzpicture}[baseline={([yshift=-.5ex]current bounding box.center)},decoration={markings, mark= at position .55 with {\arrow{stealth}}}]
		\draw[->,thick] (-1,0)  -- (1,0);
		\draw[blue,postaction={decorate}] (-.5,.8) ..controls (0,.4) .. (.5,.8);
	\end{tikzpicture}-N\times
	\begin{tikzpicture}[baseline={([yshift=-.5ex]current bounding box.center)},decoration={markings, mark= at position .55 with {\arrow{stealth}}}]
		\draw[->,thick] (-1,0) --  (1,0);
		\fill[black] (0,0) circle (1pt) node[above] {\tiny k'};
		\draw[blue,postaction={decorate}] (-.5,.8) -- (0,0);
		\draw[blue,postaction={decorate}] (0,0) -- (.5,.8);
	\end{tikzpicture}
\end{equation*}
These relations are indeed a recursion, that is, they allow one to
replace the computation of $w_{\GL_N}$ on a permutation with its computation on simpler permutations.
\end{itemize}

\end{theorem}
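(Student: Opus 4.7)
The plan is to dispatch the first three properties directly from the definition and focus the effort on the recurrence. The empty-graph value is the empty product, equal to $1$. For multiplicativity under concatenation of disjoint permutations $\sigma\in S_m$ and $\tau\in S_n$, the $\sigma$-factors appear first in the defining product and involve only the indices $i_1,\dots,i_m$, while the $\tau$-factors come second and involve only $i_{m+1},\dots,i_{m+n}$; the sum therefore factors cleanly into $w_{\GL_N}(\sigma)\,w_{\GL_N}(\tau)$. For the cyclic permutation $1\mapsto 2\mapsto\cdots\mapsto k\mapsto 1$, the defining sum unfolds directly to $\sum E_{i_1 i_2} E_{i_2 i_3}\cdots E_{i_k i_1}$, which is precisely $C_k$.

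The substance is the recurrence rule. I would apply the $\GL_N$ commutation relation $[E_{ab}, E_{cd}] = \delta_{bc}\, E_{ad} - \delta_{da}\, E_{cb}$ to the pair of neighbouring factors $E_{i_k i_{\sigma(k)}}\,E_{i_{k+1} i_{\sigma(k+1)}}$ inside the product defining $w_{\GL_N}(\sigma)$. The first step is to identify the sum in which these two factors are written in the swapped order with $w_{\GL_N}(\sigma^\#)$, where $\sigma^\# := (k\ k+1)\,\sigma\,(k\ k+1)$. This follows from the dummy-variable substitution $i_k\leftrightarrow i_{k+1}$, which alters only those factors $E_{i_j i_{\sigma(j)}}$ whose target index lies in $\{i_k,i_{k+1}\}$ (i.e.\ those with $\sigma(j)\in\{k,k+1\}$), and in precisely the way prescribed by conjugation by $(k\ k+1)$. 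A comparison of the resulting picture with the second left-hand-side diagram confirms that $\sigma^\#$ is the permutation shown there, so the left-hand side of the recurrence equals $\sum \cdots [E_{i_k i_{\sigma(k)}},\, E_{i_{k+1} i_{\sigma(k+1)}}]\cdots$.

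Expanding the commutator produces the two delta contributions. The term $\delta_{i_{\sigma(k)}\, i_{k+1}} E_{i_k i_{\sigma(k+1)}}$ forces the index at the vertex $p_c:=\sigma(k)$ to coincide with the index at $k+1$, and replaces the pair of factors at positions $k,k+1$ by a single factor $E_{i_k i_{p_d}}$ with $p_d := \sigma(k+1)$. Pictorially this is graph surgery: the vertices $k$ and $k+1$ merge into one vertex $k'$ that retains the incoming edge from $b = \sigma^{-1}(k)$ and an outgoing edge to $d$, while the original arrows $a\to k+1$ and $k\to c$ (with $a=\sigma^{-1}(k+1)$, $c=p_c$) fuse into a direct arc $a\to c$, reproducing the first right-hand-side diagram. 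The second delta term $-\delta_{i_{\sigma(k+1)}\, i_k} E_{i_{k+1} i_{\sigma(k)}}$ yields the second diagram by the symmetric argument. For the displayed special case $\sigma(k+1)=k$ the same expansion applies, but $\delta_{i_{\sigma(k+1)}\, i_k}=\delta_{i_k\, i_k}=1$ is automatic, so the now-free index $i_k$ sums to $N$; in the first term the replacement factor becomes $E_{i_k i_k}$, whose sum over $i_k$ is $C_1$, reproducing the displayed formula.

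The main obstacle is the bookkeeping in degenerate configurations where some of $p_a,p_b,p_c,p_d$ coincide with each other or with $k,k+1$. Rather than enumerate every collision, I would treat the commutator identity as the fundamental algebraic statement and, case by case, read the corresponding surgery off the delta constraints, using the displayed special case $\sigma(k+1)=k$ as the template for how such collisions yield the Casimir $C_1$ or the scalar $N$.
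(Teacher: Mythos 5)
Your proposal is correct and follows essentially the same route as the paper: the first three properties are read off the definition, and the recurrence is exactly the $\GL_N$ commutator $[E_{i_ki_{\sigma(k)}},E_{i_{k+1}i_{\sigma(k+1)}}]=\delta_{i_{\sigma(k)}i_{k+1}}E_{i_ki_{\sigma(k+1)}}-\delta_{i_{\sigma(k+1)}i_k}E_{i_{k+1}i_{\sigma(k)}}$, with your identification of the swapped term as $w_{\GL_N}((k\ k{+}1)\sigma(k\ k{+}1))$ via relabelling dummy indices being precisely the paper's remark that the second left-hand graph is the conjugate by a transposition of neighbours. The only point you leave implicit is the final claim that these relations actually terminate --- the paper adds one sentence noting that both right-hand graphs have fewer vertices and that repeated swaps reduce any permutation to a concatenation of cycles (a monomial in the $C_k$) modulo lower-degree terms.
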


\begin{proof}
We only need to prove the Recurrence Rule, which is just the graphical explanation of the Lie bracket in $\GL_N$.
\begin{eqnarray*} E_{i_ki_{\sigma(k)}}E_{i_{k+1}i_{\sigma({k+1})}}-E_{i_{k+1}i_{\sigma(k+1)}}E_{i_ki_{\sigma(k)}}
&=&[E_{i_ki_{\sigma(k)}},E_{i_{k+1}i_{\sigma(k+1)}}]\\
&=&\delta_{i_{\sigma(k)}i_{k+1}}E_{i_ki_{\sigma(k+1)}}-\delta_{i_{\sigma(k+1)}i_k}E_{i_{k+1}i_{\sigma(k)}}.
\end{eqnarray*}

In the special case, when $\sigma(k+1)=k$, we have
\[
	E_{i_ki_{\sigma(k)}}E_{i_{k+1}i_k}-E_{i_{k+1}i_k}E_{i_ki_{\sigma(k)}}=[E_{i_ki_{\sigma(k)}},E_{i_{k+1}i_k}]
=\delta_{i_{\sigma(k)}i_{k+1}}E_{i_ki_k}-\delta_{i_ki_k}E_{i_{k+1}i_{\sigma(k)}}.
\]
When summing it from $i_1,\cdots,i_m=1$ to $N$, we obtain $\sum \delta_{i_{\sigma(k)}i_l}E_{i_ki_k}=C_1\sum\delta_{i_{\sigma(k)}i_l}$ and $\sum\delta_{i_ki_k}E_{i_li_{\sigma(k)}}=N\sum E_{i_li_{\sigma(k)}}$.

The second graph on the left hand side corresponds to a permutation obtained from the first one
 by a conjugation with a transposition of two neighbouring vertices. Both graphs on the right hand side have smaller number of vertices.
 Applying these relations, every graph can be reduced to a monomial in the variables $C_k$ (a concatenation of cyclic permutations)
 modulo terms of smaller degrees. This provides an inductive computation of the invariant $w_{\GL_N}$.
\end{proof}

\begin{remark}
In the situation of permutations corresponding to chord diagrams, the difference at the right-hand side of the recurrence relation represents a Jacobi diagram with a triple vertex according to the STU relation from \cite{bar1995vassiliev,Chmutov2012}. This gives a way to calculate the weight system $w_{\GL_N}$ on primitive elements given by Jacobi diagrams. For some special elements the calculation of this sort were given in \cite{CD, Dasbach}.
\end{remark}

\begin{corollary}
The value of $w_{\GL_N}$ on a permutation is well defined, can be represented as a polynomial
in $N,C_1,C_2,\dots$, and this polynomial is universal.
\end{corollary}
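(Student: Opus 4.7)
The plan is to prove all three claims together by induction on $m$, the number of permuted elements, using the Recurrence Rule of the theorem as the reduction tool. The base case $m=0$ gives the empty permutation with value $1$, trivially a universal polynomial in $N, C_1, C_2, \ldots$

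For the inductive step, given $\sigma \in S_m$, I would apply the Recurrence Rule repeatedly to bring $\sigma$ into a form whose cycle decomposition is compatible with the cyclic order of the vertices, i.e., a concatenation of cyclic permutations; for such a permutation the value is a monomial in the $C_k$'s by the multiplicativity and cyclic-case clauses of the theorem. Concretely, the recurrence equates $w_{\GL_N}(\sigma_1) - w_{\GL_N}(\sigma_2)$ with a combination of $w_{\GL_N}$ applied to permutations on $m-1$ elements, where $\sigma_1, \sigma_2$ differ by swapping the horizontal positions of two neighboring vertices $k, k+1$; in the special case $\sigma(k+1)=k$ there are additional coefficients $N$ and $C_1$. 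Since adjacent transpositions generate the full $S_m$-action on the horizontal orderings of the vertex set, finitely many such moves suffice to reach a concatenation of cyclic permutations, at the cost of adding correction terms to which the inductive hypothesis applies; the only numerical coefficients ever introduced are $\pm 1$, $\pm N$, and $\pm C_1$, all universal.

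Well-definedness --- independence of the polynomial from the sequence of reductions --- follows from the fact that each instance of the Recurrence Rule is a genuine identity in $U(\GL_N)$, so any two reductions produce the same element of $ZU(\GL_N)$; for $N \ge m$ the Casimirs $C_1, \ldots, C_m$ are algebraically independent generators of $ZU(\GL_N)$, which uniquely determines the polynomial expression. Universality follows because the recurrence has the same shape for every $N$ --- the coefficients $1$, $N$, and $C_1$ are treated as symbols independent of any particular $N$. The main obstacle I expect is verifying that the reduction procedure terminates with the desired cycle-compatible form; this requires exhibiting a monovariant, for example, the minimum number of adjacent transpositions needed to sort $\sigma$ into consecutive cycles, and checking that each application of the recurrence either strictly decreases it or produces a term on $m-1$ vertices, so that a lexicographic induction on $(m, \text{monovariant})$ closes the argument.
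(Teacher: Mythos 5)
Your proposal is correct and follows essentially the same route as the paper: the paper derives this corollary directly from the closing paragraph of the proof of the Theorem, which observes that the Recurrence Rule reduces any permutation graph to a concatenation of cyclic permutations (a monomial in the $C_k$) modulo terms on fewer vertices, giving an inductive computation; well-definedness is automatic since $w_{\GL_N}(\sigma)$ is defined a priori as an element of $U(\GL_N)$, and universality follows because the recursion is uniform in $N$. Your added details --- the termination monovariant and the uniqueness of the polynomial via algebraic independence of $C_1,\dots,C_m$ for $N\ge m$ --- merely make explicit what the paper leaves implicit.
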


\begin{definition}[universal $\GL$-weight system on permutations]
  The {\it universal $\GL$-weight system on permutation} $w_\GL$ is the weight system
  taking values in the polynomial ring $\BC[N,C_1,C_2,\cdots]$, which satisfies  $w_\GL(\sigma)=w_{\GL_N}(\sigma)$, 
  for all permutations $\sigma$ and is obtained by the above recurrence relations.
\end{definition}

\begin{example}
Let us compute the value of $w_{\GL}$ on the cyclic permutation $(1\ 3\ 2)$ by switching the places of node $2$ and $3$:
\begin{equation*}
  \begin{tikzpicture}[baseline={([yshift=-.5ex]current bounding box.center)},decoration={markings, mark= at position .55 with {\arrow{stealth}}}]
    \draw[->,thick] (-1,0) --  (1,0);
    \fill[black] (-.5,0) circle (1pt) node[below] {\tiny 1};
    \fill[black] (.5,0) circle (1pt) node[below] {\tiny 3};
    \fill[black] (0,0) circle (1pt) node[below] {\tiny 2};
    \draw[blue,postaction={decorate}] (-.5,0) ..controls(0,.3).. (.5,0);
    \draw[blue,postaction={decorate}] (.5,0) ..controls(.25,-.3).. (0,0);
    \draw[blue,postaction={decorate}] (0,0) ..controls(-.25,-.3).. (-.5,0);
    \draw (0,-0.7) node { $(1\ 3\ 2)$};
    \draw (0,0.8) node {};
  \end{tikzpicture}-
  \begin{tikzpicture}[baseline={([yshift=-.5ex]current bounding box.center)},decoration={markings, mark= at position .55 with {\arrow{stealth}}}]
    \draw[->,thick] (-1,0) --  (1,0);
    \fill[black] (-.5,0) circle (1pt) ;
    \fill[black] (.5,0) circle (1pt) ;
    \fill[black] (0,0) circle (1pt) ;
    \draw[blue,postaction={decorate}] (.5,0) ..controls(0,-.3).. (-.5,0);
    \draw[blue,postaction={decorate}] (-.5,0) ..controls(-.25,.3).. (0,0);
    \draw[blue,postaction={decorate}] (0,0) ..controls(.25,.3).. (.5,0);
    \draw (0,-0.7) node { $(1\ 2\ 3)$};
    \draw (0,0.8) node {};
  \end{tikzpicture}=C_1\times
  \begin{tikzpicture}[baseline={([yshift=-.5ex]current bounding box.center)},decoration={markings, mark= at position .55 with {\arrow{stealth}}}]
    \fill[black] (0,0) circle (1pt) ;
    \draw[->,thick] (-1,0)  -- (1,0);
    \draw[blue,postaction={decorate}] (0,0) ..controls (-.5,.7) and (.5,.7)   .. (0,0);
    \draw (0,-0.47) node { $(1)$};
  \end{tikzpicture}-N\times
  \begin{tikzpicture}[baseline={([yshift=-.5ex]current bounding box.center)},decoration={markings, mark= at position .55 with {\arrow{stealth}}}]
    \draw[->,thick] (-1,0) --  (1,0);
    \fill[black] (-.4,0) circle (1pt) node[above] {};
    \fill[black] (.4,0) circle (1pt) node[above] {};
    \draw[blue,postaction={decorate}] (-.4,0) ..controls(0,-.3).. (.4,0);
    \draw[blue,postaction={decorate}] (.4,0) ..controls(0,.3).. (-.4,0);
    \draw (0,0.8) node {};
    \draw (0,-0.7) node { $(1\ 2)$};
  \end{tikzpicture}
\end{equation*}
\begin{eqnarray*}
w_\GL((1\ 3\ 2))&=&w_\GL((1\ 2\ 3))+C_1\times w_\GL((1))-N\times w_\GL((1\ 2))\\
                &=&C_3+C_1^2-NC_2
\end{eqnarray*}
\end{example}

The reader will find below a table of values of the $\GL$ weight system on chord diagrams $K_n$,
which have $n$ chords and each chord crosses each other; these results were obtained by computer calculation. 
These diagrams are chosen because computation
of Lie algebra weight systems on them is extremely nontrivial, even for the Lie algebra $\SL_2$,
where we know the Chmutov–Varchenko recurrence relation.
In addition, these diagrams generate a Hopf subalgebra of the Hopf algebra
of chord diagrams, see Sec.~\ref{sec:ha},
which allows us to compute the $\GL$ weight system on the projection of $K_n$ to the primitive space.

\begin{result}
	\begin{align*}
		w_{\GL}(K_2)&=-NC_2+C_1^2+C_2^2\\
		w_{\GL}(K_3)&=2 C_2 N^2+(-2 C_1^2-3 C_2^2) N+C_2^3+3 C_1^2 C_2\\
		w_{\GL}(K_4)&=-6 C_2 N^3+(6 C_1^2+11 C_2^2-2 C_3) N^2+(-6 C_2^3-14 C_1^2 C_2+6 C_1 C_2-2 C_2+2 C_4) N\\
					  &\phantom{=}+3 C_1^4-4 C_1^3+6 C_2^2 C_1^2+2 C_1^2-8 C_3 C_1+C_2^4+6 C_2^2\\
		w_{\GL}(K_5)&=24 C_2 N^4+(-24 C_1^2-50 C_2^2+24 C_3) N^3\\
					  &\phantom{=}+(35 C_2^3+70 C_1^2 C_2-72 C_1 C_2-10 C_3 C_2+32 C_2-24 C_4) N^2\\
					  &\phantom{=}+(-20 C_1^4+48 C_1^3-50 C_2^2 C_1^2-32 C_1^2+30 C_2^2 C_1+96 C_3 C_1-10 C_2^4-82 C_2^2+10 C_2 C_4) N\\
					  &\phantom{=}+C_2^5+10 C_1^2 C_2^3+30 C_2^3+15 C_1^4 C_2-20 C_1^3 C_2+10 C_1^2 C_2-40 C_1 C_3 C_2\\
		w_{\GL}(K_6)&=-120 C_2 N^5+(120 C_1^2+274 C_2^2-240 C_3) N^4\\
					  &\phantom{=}+(-225 C_2^3-404 C_1^2 C_2+720 C_1 C_2+174 C_3 C_2-416 C_2+224 C_4) N^3\\
					  &\phantom{=}+(130 C_1^4-480 C_1^3+375 C_2^2 C_1^2-30 C_3 C_1^2+416 C_1^2-522 C_2^2 C_1\\
					  &\phantom{=}-896 C_3 C_1+85 C_2^4+1014 C_2^2-30 C_2^2 C_3-88 C_3-174 C_2 C_4+32 C_5) N^2\\
					  &\phantom{=}+(-15 C_2^5-130 C_1^2 C_2^3+90 C_1 C_2^3-552 C_2^3+30 C_4 C_2^2-165 C_1^4 C_2+438 C_1^3 C_2-492 C_1^2 C_2\\
					  &\phantom{=}+264 C_1 C_2+696 C_1 C_3 C_2+64 C_3 C_2-72 C_2+30 C_1^2 C_4-160 C_1 C_4+88 C_4-16 C_6) N\\
					  &\phantom{=}+15 C_1^6-60 C_1^5+45 C_2^2 C_1^4+150 C_1^4-60 C_2^2 C_1^3-120 C_3 C_1^3-176 C_1^3+15 C_2^4 C_1^2\\
					  &\phantom{=}+120 C_2^2 C_1^2+256 C_3 C_1^2+72 C_1^2-192 C_2^2 C_1-120 C_2^2 C_3 C_1-352 C_3 C_1\\
					  &\phantom{=}+96 C_5 C_1+C_2^6+90 C_2^4+264 C_2^2+160 C_3^2-240 C_2 C_4\\
		w_{\GL}(K_7)&=720 C_2 N^6+(-720 C_1^2-1764 C_2^2+2400 C_3) N^5\\
					  &\phantom{=}+(1624 C_2^3+2688 C_1^2 C_2-7200 C_1 C_2-2324 C_3 C_2+5264 C_2-1856 C_4) N^4\\
					  &\phantom{=}+(-924 C_1^4+4800 C_1^3-2954 C_2^2 C_1^2+644 C_3 C_1^2-5264 C_1^2\\
					  &\phantom{=}+6972 C_2^2 C_1+7424 C_3 C_1-735 C_2^4-12892 C_2^2+714 C_2^2 C_3+3392 C_3+2212 C_2 C_4-1088 C_5) N^3\\
					  &\phantom{=}+(175 C_2^5+1365 C_1^2 C_2^3-2142 C_1 C_2^3-70 C_3 C_2^3+8358 C_2^3-714 C_4 C_2^2+1540 C_1^4 C_2\\
					  &\phantom{=}-6580 C_1^3 C_2+11736 C_1^2 C_2-10176 C_1 C_2-210 C_1^2 C_3 C_2-8848 C_1 C_3 C_2-2792 C_3 C_2+224 C_5 C_2\\
					  &\phantom{=}+3456 C_2-644 C_1^2 C_4+5440 C_1 C_4-3392 C_4+544 C_6) N^2\\
					  &\phantom{=}+(-210 C_1^6+1288 C_1^5-735 C_2^2 C_1^4-4412 C_1^4+2058 C_2^2 C_1^3+2576 C_3 C_1^3+6784 C_1^3-280 C_2^4 C_1^2\\
					  &\phantom{=}-4704 C_2^2 C_1^2-8704 C_3 C_1^2+210 C_2 C_4 C_1^2-3456 C_1^2+210 C_2^4 C_1+8376 C_2^2 C_1+2856 C_2^2 C_3 C_1\\
					  &\phantom{=}+13568 C_3 C_1-1120 C_2 C_4 C_1-3264 C_5 C_1-21 C_2^6-2212 C_2^4-10680 C_2^2-4096 C_3^2+448 C_2^2 C_3\\
					  &\phantom{=}+70 C_2^3 C_4+7432 C_2 C_4-112 C_2 C_6)N\\
					  &\phantom{=}+504 C_1^2 C_2-1232 C_1^3 C_2+1050 C_1^4 C_2-420 C_1^5 C_2+105 C_1^6 C_2+3192 C_2^3-1344 C_1 C_2^3\\
					  &\phantom{=}+700 C_1^2 C_2^3-140 C_1^3 C_2^3+105 C_1^4 C_2^3+210 C_2^5+21 C_1^2 C_2^5+C_2^7-5152 C_1 C_2 C_3+1792 C_1^2 C_2 C_3\\
					  &\phantom{=}-840 C_1^3 C_2 C_3-280 C_1 C_2^3 C_3+1120 C_2 C_3^2+1344 C_1^2 C_4-1680 C_2^2 C_4+672 C_1 C_2 C_5		 		
	\end{align*}
\end{result}

\begin{remark}
The Lie algebra $\GL_N$ is not simple. Instead, it is a direct sum of a commutative one-dimensional
Lie algebra and a simple Lie algebra $\SL_N$. The one-dimensional commutative Lie subalgebra in $\GL_N$
consists of scalar matrices, which are $\BC$-multiples of the identity matrix. Therefore, the center
$ZU(\GL_N)$ of the universal enveloping algebra of $\GL_N$ is the tensor product of the center of the
universal enveloping algebra of~$\BC$ and that of $\SL_N$, whence the ring of polynomials in the first Casimir~$C_1$
with coefficients in $ZU(\SL_N)$. Therefore, the values of the weight system $w_{\SL_N}$ can be computed
from that of $w_{\GL_N}$ by setting $C_1=0$. In the result, $C_2,C_3,\dots$ denote the projections of
the corresponding Casimir elements in $ZU(\GL_N)$ to $ZU(\SL_N)$.
\end{remark}

\section{Symmetric functions and Harish–Chandra isomorphism}
\label{sec:sym}

In this section, we make use of the  Harish-Chandra isomorphism
for the Lie algebras $\GL_N$ to compute the corresponding weight systems.

\begin{definition}[algebra of shifted symmetric polynomials]\ \\
For a positive integer $N$, the algebra $\Lambda^*(N)$ of shifted symmetric
polynomials in~$N$ variables $x_1,x_2,\cdots,x_N$ consists of polynomials
that are invariant under changes of variables
$$
(x_1,\cdots,x_i,x_{i+1},\cdots,x_N)\mapsto(x_1,\cdots,x_{i+1}-1,x_{i}+1,\cdots,x_N),
$$
for all $i=1,\cdots,N-1$.
Equivalently, this is the algebra of symmetric polynomials in the shifted variables $(x_1-1,x_2-2,\dots,x_N-N)$.
\end{definition}

The universal enveloping algebra $U(\GL_N)$ of the Lie algebra $\GL_N$ admits
the direct sum decomposition
\begin{equation}\label{HCd}
	U(\GL_N)=(\fn_-U(\GL_N)+U(\GL_N)\fn_+)\oplus U(\fh),
\end{equation}
where $\fn_-$ and $\fn_+$ are
 the nilpotent subalgebras of, respectively, upper and lower triangular matrices in $\GL_N$,
 and $\fh$ is the subalgebra of diagonal matrices.

\begin{definition}[Harish–Chandra projection in $U(\GL_N)$]\ \\
The Harish–Chandra projection for $U(\GL_N)$ is the projection to the second summand in~(\ref{HCd})
\[
	\phi:U(\GL_N)\to U(\fh)=\BC[E_{11},\cdots,E_{NN}],
\]
where $E_{11},\cdots,E_{NN}$ are the diagonal matrix units in $\GL_N$; they commute with one another.
\end{definition}
\begin{theorem}[Harish–Chandra isomorphism~\cite{Zhe,Ols1996}]
	The Harish–Chandra projection, when restricted to the center $ZU(\GL_N)$,
is an algebra isomorphism to the algebra $\Lambda^*(N)\subset U(\fh)$ of shifted symmetric polynomials in $E_{11},\cdots,E_{NN}$.
\end{theorem}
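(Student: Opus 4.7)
\noindent\emph{Proof plan.}
The plan is to follow the standard Harish-Chandra argument adapted to $\GL_N$, in four steps: multiplicativity of the restricted projection, landing in $\Lambda^*(N)$, injectivity, and surjectivity. First I would verify that although $\phi$ in~(\ref{HCd}) is not multiplicative on all of $U(\GL_N)$, its restriction to $ZU(\GL_N)$ is. Writing $z_i = \phi(z_i) + r_i$ with $r_i \in \fn_- U(\GL_N) + U(\GL_N)\fn_+$ and using that a central $z_i$ commutes with $\fh$, one checks that $\phi(z_1)r_2$, $r_1\phi(z_2)$, and $r_1 r_2$ all lie in the kernel summand, so $\phi(z_1 z_2) = \phi(z_1)\phi(z_2)$.

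Next I would pass to highest weight theory. For any weight $\lambda = (\lambda_1, \dots, \lambda_N) \in \fh^*$, let $M(\lambda)$ denote the Verma module with highest weight vector $v_\lambda$. Any $z \in ZU(\GL_N)$ acts on $v_\lambda$ as a scalar $\chi_\lambda(z)$, and the decomposition~(\ref{HCd}) yields $\chi_\lambda(z) = \phi(z)(\lambda_1, \dots, \lambda_N)$, because the remainder $r = z - \phi(z)$ annihilates $v_\lambda$: the $U(\GL_N)\fn_+$ part kills $v_\lambda$ directly, while the $\fn_- U(\GL_N)$ part produces vectors of strictly lower weight. Two Verma modules $M(\lambda)$ and $M(\mu)$ share the same central character precisely when $\mu$ is obtained from $\lambda$ by permuting the entries of the shifted tuple $(\lambda_1, \lambda_2 - 1, \dots, \lambda_N - (N-1))$, which is exactly the invariance defining $\Lambda^*(N)$. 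Hence the image lies in $\Lambda^*(N)$. Injectivity then follows: if $\phi(z) = 0$, the scalar $\chi_\lambda(z)$ vanishes for every $\lambda$, forcing $z = 0$ because a central element is determined by its scalar action on a Zariski-dense family of highest weight modules.

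Surjectivity is where I expect the main work. My plan is to exhibit explicit preimages of an algebra generating set of $\Lambda^*(N)$. The Casimir elements $C_1, \dots, C_N$ of Section~\ref{sec:def} are the natural candidates: I would compute $\phi(C_k)$ by expanding $C_k = \sum E_{i_1 i_2} E_{i_2 i_3} \cdots E_{i_k i_1}$ and repeatedly applying the commutation relations~(\ref{e1}) to push off-diagonal factors into the kernel summand $\fn_- U(\GL_N) + U(\GL_N)\fn_+$. The result should be the $k$-th shifted power sum $p_k$ in the variables $E_{11}, \dots, E_{NN}$ plus lower-degree shifted symmetric corrections, so $\phi(C_1), \dots, \phi(C_N)$ generate $\Lambda^*(N)$ as an algebra by triangularity in degree. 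The hard part will be the combinatorial bookkeeping in this last calculation---tracking the Kronecker deltas produced by~(\ref{e1}) and confirming that the leading term is the \emph{shifted} power sum rather than the ordinary one. Once that triangularity is secured, a graded dimension comparison (both sides are polynomial algebras in $N$ generators of the expected degrees) upgrades algebraic generation to the full isomorphism statement.
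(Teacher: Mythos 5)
The paper does not prove this theorem at all: it is imported from Okounkov--Olshanski with a citation, so there is no in-paper argument to compare yours against. Judged on its own, your plan is the standard Harish--Chandra argument (multiplicativity on the centre, evaluation of central characters on Verma modules, Weyl-group invariance of the dot action giving shifted symmetry, injectivity from separation by highest-weight modules, surjectivity from triangularity of $\phi(C_k)$ against the shifted power sums $p_k$), and that route does work; the paper's later explicit formulas expressing $\phi(C_k)$ as $p_k$ plus lower-order terms confirm the triangularity you are counting on for surjectivity.

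Two steps are stated in a form that would not survive being written out. First, in the multiplicativity argument the claim that $r_1 r_2$ lies in $\fn_-U(\GL_N)+U(\GL_N)\fn_+$ is false as stated: a product of the form $(U\fn_+)(\fn_-U)$ can have a nonzero $U(\fh)$-component (already $E_{12}E_{21}=E_{21}E_{12}+E_{11}-E_{22}$). The correct argument never multiplies $r_1$ by $r_2$; instead write $z_1z_2=\phi(z_1)z_2+r_1z_2$, use centrality of $z_2$ to push it past the $\fn_+$-factors in $r_1$ so that $r_1z_2$ stays in the kernel summand, and then expand $\phi(z_1)z_2=\phi(z_1)\phi(z_2)+\phi(z_1)r_2$ with $\phi(z_1)r_2$ in the kernel because $\mathrm{ad}\,\fh$ preserves $\fn_\pm$. (Alternatively, multiplicativity follows for free from your own central-character identity $\chi_\lambda(z)=\phi(z)(\lambda)$ evaluated on all $\lambda$.) Second, injectivity as you state it hides a real theorem: knowing $\chi_\lambda(z)=0$ for all $\lambda$ only says $z$ annihilates every highest-weight module, and concluding $z=0$ requires that finite-dimensional (or Verma) modules separate points of $U(\GL_N)$ --- a genuine fact about reductive Lie algebras in characteristic zero, not a formal consequence of Zariski density of the parameters $\lambda$. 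You should either cite that separation theorem or argue directly from the PBW decomposition. Finally, mind the paper's convention that $\fn_-$ consists of \emph{upper}-triangular matrices: it fixes the sign of the $\rho$-shift, i.e.\ whether the invariance is in $x_i-i$ or $x_i+i$, so your identification of the dot-action orbits with the definition of $\Lambda^*(N)$ needs that bookkeeping done consistently.
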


Thus, the computation of the value of the $\GL_N$ weight system on a chord diagram
can be elaborated by applying the Harish-Chandra projection to each monomial of
the polynomial. For such a monomial, the projection can be computed by moving
variables $E_{ij}$ with $i>j$ to the left, and/or variables $E_{ij}$ with $i<j$ to the right
by means of applying the commutator relations. If, in the process, we obtain
monomials in $\fn_-U(\GL_N)$ or $U(\GL_N)\fn_+$, then we replace such a monomial
with $0$. A monomial in the (mutually commuting) variables $E_{ii}$ cannot be simplified, and its projection to
$U(\fh)$ coincides with itself. The resulting polynomial in $E_{11},\cdots,E_{NN}$ will be automatically shifted symmetric.


\begin{example}
	Let's compute the projection of the quadratic Casimir element
	\[
		C_2=\sum_{i,j} E_{ij}E_{ji}\in ZU(\GL_N)
	\]
to $U(\fh)$. We have
\begin{eqnarray*}
		C_2&=&\sum_i E_{ii}^2+\sum_{i< j}E_{ij}E_{ji}+\sum_{i> j}E_{ij}E_{ji}\\
&=&
\sum_i E_{ii}^2+2\sum_{i>j} E_{ij}E_{ji}+\sum_{i<j}[E_{ij},E_{ji}]\\
&=&
\sum_i E_{ii}^2+2\sum_{i>j} E_{ij}E_{ji}+\sum_{i<j}(E_{ii}-E_{jj}).
\end{eqnarray*}
In this expression, the first and the third summand	depend on the diagonal unit
elements $E_{ii}$ only, while the second summand is in $\fn_-U(\GL_N)+U(\GL_N)\fn_+$, whence
the image under the projection is
	\begin{align*}
\phi(C_2)=
		&\sum_i E_{ii}^2+\sum_{i<j}(E_{ii}-E_{jj})\\
		=&\sum_{i}(E_{ii}^2+(N+1-2i)E_{ii}).
		\end{align*}
\end{example}

Similarly to the ring of ordinary symmetric functions, the ring $\Lambda^*(N)$ of shifted symmetric functions in~$N$ variables
is isomorphic to a polynomial ring in~$N$ variables.
There is a variety of convenient $N$-tuples of generators in $\Lambda^*(N)$.
One of them is the tuple of shifted power sum polynomials
$$
p_k=\sum_{i}\left(\left(E_{ii}+\frac{N+1}2-i\right)^k-\left(\frac{N+1}2-i\right)^k\right).
$$

Representing $\phi(C_2)$ in the form
$$
\phi(C_2)=\sum_{i}\left(\left(E_{ii}+\frac{N+1}{2}-i\right)^2-\left(\frac{N+1}{2}-i\right)^2\right),
$$
we see that it is just~$p_2$.

\begin{remark}
Since the Harish–Chandra isomorphism can be applied to arbitrary elements of $ZU(\GL_N)$, we can also apply it
to the values of $w_\GL$ on permutations.
\end{remark}

For $k>2$,  the expression for $\phi(C_k)$ is not reduced to just linear combinations of power sums.
If fact, we have the following explicit formula, which follows from 
\cite{PerelomovPopov1968}, \cite[\S~60]{Zhe} and \cite[Remark 2.1.20]{Ols1991},
\begin{eqnarray*}
1-Nu-\sum_{k=1}^\infty
\phi(C_{k})u^{k+1}&=&\prod_{i=1}^N\frac{1-(E_{ii}+N-i+1)u}{1-(E_{ii}+N-i)u}\\
&=&(1-Nu)e^{\sum_{k=1}^\infty\frac{(1-\frac{N-1}{2}u)^{-k}-(1-\frac{N+1}{2}u)^{-k}}{k}u^kp_k }.
\end{eqnarray*}
This provides an expression for the image $\phi(C_k)$ of $C_k$ as a polynomial in $p_1,p_2,\dots$, which is valid for all~$N$. The projections of the Casimir elements $C_1,\cdots,C_N$ to $U(\fh)$
can be expressed in shifted power sums $p_1,\cdots,p_N$ in the following way:
	\begin{align*}
	 	\phi(C_1)&=p_1\\
	 	\phi(C_2)&=p_2\\
	 	\phi(C_3)&=-\frac{1}{4} N^2 p_1+\frac{N p_2}{2}+\frac{p_1}{4}+p_3-\frac{p_1^2}{2}\\
\phi(C_4)&=-\frac{1}{4} N^3 p_1+N \left(-\frac{p_1^2}{2}+\frac{p_1}{4}+p_3\right)-p_1 p_2+\frac{p_2}{2}+p_4\\	 	\cdots
	\end{align*}
Computations using the Harish-Chandra isomorphism also are elaborative,
and the results they produce are not universal, they depend on~$N$. 
It is more efficient, therefore, to substitute the known values $\phi(C_k)$
into the answers obtained by the previous method.

For the values of the $\GL$ weight system on the chord diagrams~$K_n$, this yields

\begin{result}
  \begin{align*}
    {w}_{\GL}(K_2)&=-N p_2+p_1^2+p_2^2\\
    {w}_{\GL}(K_3)&=2 N^2 p_2+N \left(-2 p_1^2-3 p_2^2\right)+p_2^3+3 p_1^2 p_2\\
    {w}_{\GL}(K_4)&=-7 N^3 p_2+N^2 \left(8 p_1^2+11 p_2^2\right)+N \left(-6 p_2^3-14 p_1^2 p_2-p_2+2 p_4\right)\\
                  &\phantom{=}+3 p_1^4+6 p_2^2 p_1^2-8 p_3 p_1+p_2^4+6 p_2^2\\
    {w}_{\GL}(K_5)&=36 N^4 p_2+N^3 \left(-48 p_1^2-55 p_2^2\right)+N^2 \left(35 p_2^3+80 p_1^2 p_2+20 p_2\right)\\
                  &\phantom{=}+N \left(-20 p_1^4-50 p_2^2 p_1^2-8 p_1^2-10 p_2^4-77 p_2^2+10 p_2 p_4-24 p_4\right)\\
                  &\phantom{=}+p_2^5+10 p_1^2 p_2^3+30 p_2^3+15 p_1^4 p_2-40 p_1 p_3 p_2+96 p_1 p_3\\
    {w}_{\GL}(K_6)&=-243 N^5 p_2+N^4 \left(376 p_1^2+361 p_2^2\right)+N^3 \left(-240 p_2^3-593 p_1^2 p_2-334 p_2+252 p_4\right)\\
                  &\phantom{=}+N^2 \left(160 p_1^4+405 p_2^2 p_1^2+232 p_1^2-1088 p_3 p_1+85 p_2^4+999 p_2^2-174 p_2 p_4\right)\\
                  &\phantom{=}+N (-15 p_2^5-130 p_1^2 p_2^3-537 p_2^3+30 p_4 p_2^2-165 p_1^4 p_2-159 p_1^2 p_2\\
                  &\phantom{=}+696 p_1 p_3 p_2-31 p_2+30 p_1^2 p_4+68 p_4-16 p_6)\\
                  &\phantom{=}+15 p_1^6+45 p_2^2 p_1^4+48 p_1^4-120 p_3 p_1^3+15 p_2^4 p_1^2+90 p_2^2 p_1^2\\
                  &\phantom{=}-120 p_2^2 p_3 p_1-192 p_3 p_1+96 p_5 p_1+p_2^6+90 p_2^4+144 p_2^2+160 p_3^2-240 p_2 p_4\\
    {w}_{\GL}(K_7)&=2022 N^6 p_2+N^5 \left(-3580 p_1^2-2947 p_2^2\right)+N^4 \left(1981 p_2^3+5446 p_1^2 p_2+5556 p_2-2808 p_4\right)\\
                  &\phantom{=}+N^3 \left(-1568 p_1^4-3773 p_2^2 p_1^2-5336 p_1^2+13280 p_3 p_1-770 p_2^4-14108 p_2^2+2408 p_2 p_4\right)\\
                  &\phantom{=}+N^2 (175 p_2^5+1435 p_1^2 p_2^3+8505 p_2^3-714 p_4 p_2^2+1750 p_1^4 p_2+5258 p_1^2 p_2\\
                  &\phantom{=}-10192 p_1 p_3 p_2+1862 p_2-644 p_1^2 p_4-2712 p_4+544 p_6)\\
                  &\phantom{=}+N (-210 p_1^6-735 p_2^2 p_1^4-1656 p_1^4+2576 p_3 p_1^3-280 p_2^4 p_1^2-2877 p_2^2 p_1^2\\
                  &\phantom{=}+210 p_2 p_4 p_1^2-524 p_1^2+2856 p_2^2 p_3 p_1+8800 p_3 p_1-3264 p_5 p_1-21 p_2^6-2177 p_2^4\\
                  &\phantom{=}-6985 p_2^2-4096 p_3^2+70 p_2^3 p_4+7292 p_2 p_4-112 p_2 p_6)\\
                  &\phantom{=}+p_2^7+21 p_1^2 p_2^5+210 p_2^5+105 p_1^4 p_2^3+630 p_1^2 p_2^3-280 p_1 p_3 p_2^3+2352 p_2^3-1680 p_4 p_2^2\\
                  &\phantom{=}+105 p_1^6 p_2+336 p_1^4 p_2+1120 p_3^2 p_2-840 p_1^3 p_3 p_2-4032 p_1 p_3 p_2+672 p_1 p_5 p_2+1344 p_1^2 p_4\\
  \end{align*}
\end{result}

\section{Hopf algebra structure and projection to primitives}\label{sec:ha}

Multiplicative weight systems often become simpler
when restricted to primitive elements in the Hopf algebra of chord diagrams.
This is true, in particular, for the weight systems associated to metrized Lie
algebras. The degree of the value of such a weight system $w_\fg$
on a chord diagram with~$n$ chords is~$2n$, while for the projection of the
chord diagram to primitives it is at most~$n$, see~\cite{ChV}.
In many cases, knowing the value of a weight system on projections to primitives
allows one to understand its structure.

In this section, we recall the Hopf algebra structure on the algebra of chord diagrams
modulo $4$-term relations, and discuss the values of~$w_\GL$ on projections of chord
diagrams to primitives.

\begin{definition}
The {\it coproduct} $\Delta$ of a chord diagram $D$ is defined by
\[
	\Delta(D):=\sum_{J\subseteq[D]} D_J\otimes D_{\bar{J}},
\]
where the summation is taken over all subsets $J$ of the set $[D]$ of chords of $D$. Here $D_J$ is the chord subdiagram
of~$D$ consisting of the chords that belong to $J$ and $\bar{J} = [D] \setminus J$ is the complementary subset of chords.
\end{definition}
\begin{claim} The algebra of chord diagrams modulo $4$-term relations
	endowed with the above coproduct
	is a graded commutative, cocommutative and connected Hopf algebra.
\end{claim}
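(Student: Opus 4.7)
The plan is to establish in sequence the four structural properties claimed, and then to upgrade the resulting bialgebra to a Hopf algebra via the standard fact that any graded, connected bialgebra over a field admits a unique antipode (defined inductively by the degree). The unit is the empty chord diagram and the counit is the projection onto the degree-zero component; both grade-0 components are spanned by the empty diagram, which gives connectedness at once. The grading by the number of chords is preserved by both operations since concatenation adds chord counts and a coproduct summand with $|J|=k$ and $|\bar{J}|=n-k$ lives in bidegree $(k,n-k)$.

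First, I would check that the product and coproduct are well defined on the quotient by $4$-term relations. For the coproduct this is essentially formal: applying $\Delta$ to a $4$-term relator and splitting the sum according to whether the four chords involved lie in $J$ or in $\bar{J}$ yields, for each case, a $4$-term relation tensored with a common chord subdiagram, hence vanishes in the quotient. For the product, the nontrivial point is independence of the cutting locations; this reduces to the statement that pushing an endpoint of a chord past an adjacent endpoint on the Wilson loop changes the diagram by a $4$-term relator, so any two cutting choices produce diagrams differing by such relators.

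Second, commutativity of the product follows from the same sliding argument: dragging the chord ends of $D_2$ all the way around the glued Wilson loop past the chord ends of $D_1$ transports the configuration $D_1 D_2$ to $D_2 D_1$, and each elementary transposition introduces a combination of $4$-term relators, all zero in the quotient. Cocommutativity is essentially tautological: the involution $J\leftrightarrow \bar{J}$ on subsets of $[D]$ exchanges the two tensor factors while preserving the summation range. The bialgebra compatibility $\Delta(D_1\cdot D_2)=\Delta(D_1)\cdot\Delta(D_2)$ follows from the bijection between subsets of chords of the concatenation $D_1\cdot D_2$ and pairs $(J_1,J_2)$ with $J_i\subseteq [D_i]$, under which the chord subdiagram restriction commutes with concatenation up to $4$-term relations (again by the sliding argument, since the positions of the base points in the product chord diagram do not affect the class).

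The main obstacle is the sliding lemma underlying both the well-definedness of the product and its commutativity: one must verify carefully that the difference between two adjacent positions of a chord endpoint on the Wilson loop is exactly a $4$-term relator, after which everything else is either formal (coproduct, counit, unit axioms) or a standard application of the graded connected bialgebra theorem that produces the antipode by the recursion $S(x)=-x-\sum S(x')\cdot x''$ on the reduced coproduct $\Delta(x)=1\otimes x+x\otimes 1+\sum x'\otimes x''$, which converges because $x'$ and $x''$ have strictly smaller degrees than $x$.
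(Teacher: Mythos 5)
The paper itself states this Claim without proof, as a standard fact going back to Kontsevich and Bar-Natan (see the book of Chmutov--Duzhin--Mostovoy), so the only thing to assess is whether your proposed argument is sound. Its overall architecture is the standard one: check that both operations descend to the quotient by the $4$-term relations, get cocommutativity from the symmetry $J\leftrightarrow\bar J$, note that the grading and connectedness are immediate, and produce the antipode by the graded connected bialgebra recursion. The coproduct computation, the bialgebra compatibility via the bijection of subsets with pairs $(J_1,J_2)$, and the antipode step are all fine.

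However, the key lemma on which you hang both the well-definedness and the commutativity of the product is misstated, and as stated it is false. You assert that ``the difference between two adjacent positions of a chord endpoint on the Wilson loop is exactly a $4$-term relator,'' i.e.\ that transposing two adjacent chord endpoints does not change the class of the diagram modulo $4$T. If that were true, any two chord diagrams with $n$ chords would coincide modulo $4$T, since any two arrangements of the $2n$ endpoints are connected by adjacent transpositions; the degree-$n$ component would then be one-dimensional, contradicting for instance the paper's computation $w_{\GL}(K_2)=C_2^2+C_1^2-NC_2\ne C_2^2=w_{\GL}(K_1)^2$, which shows $K_2\ne K_1^2$ modulo $4$T. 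The correct statement is weaker: the $4$-term relation says that the \emph{change} produced by sliding a fixed endpoint $q$ of a chord past one endpoint of another chord $c'$ equals the change produced by sliding $q$ past the other endpoint of $c'$; hence sliding $q$ past both endpoints of $c'$ in succession yields zero net change, and therefore sliding an endpoint (or, dually, a whole block of endpoints forming a union of complete chords) past a \emph{share} --- a set of endpoints containing both or neither endpoint of every chord --- leaves the class unchanged. It is this share-sliding lemma, not the single-transposition claim, that proves independence of the cut point and commutativity: each factor $D_1$, $D_2$ occupies an arc whose endpoint set is a share, and one drags one share past the endpoints of the other factor one chord at a time. With that correction the remainder of your argument goes through.
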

\begin{definition}An element $p$ of a Hopf algebra is called {\it primitive} if $\Delta(p) = 1 \otimes p + p \otimes 1$.
\end{definition}

The Milnor--Moore theorem, when applied to the Hopf algebra of chord diagrams,
asserts that this Hopf algebra admits a decomposition into the direct sum of
the subspace of primitive elements and the subspace of decomposable elements
(polynomials in primitive elements of smaller degree).
There exists, therefore, a natural projection from the space of chord diagrams
to the subspace of primitive elements, whose kernel is the subspace of decomposable elements.
We denote this projection by~$\pi$.

\begin{theorem}[\cite{lando2000hopf,schmitt1994incidence}]
The projection $\pi(D)$ of a chord diagram $D$ to the subspace of primitive elements
 is given by the formula
\begin{align*}
	\pi(D) &= D-1!\sum_{[D_1]\sqcup [D_2]=[D]}D_1\cdot D_2 +2! \sum_{[D_1]\sqcup [D_2]\sqcup[D_3]=[D]}D_1\cdot D_2\cdot D_3\cdots\\
	&= D-\sum_{i=2}^{|[D]|}(-1)^i(i-1)!\sum_{\substack{\bigsqcup\limits_{j=1}^i [D_j]=[D]\\ [D_j]\ne \emptyset}}\prod_{j=1}^i D_j
\end{align*}
where the sum is taken over all unordered splittings of the set of chords
of $D$ into $2, 3,$ etc nonempty subsets.
\end{theorem}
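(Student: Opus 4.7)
The plan is to identify $\pi$ with the convolution logarithm of the identity on $H$, and then to verify the two properties that uniquely characterize the projection onto the space $P$ of primitives: (a) $\pi|_P=\mathrm{id}_P$, and (b) $\pi$ vanishes on the subspace $(H^+)^2$ of decomposables. Combined with the Milnor--Moore decomposition $H^+=P\oplus(H^+)^2$ of the graded connected commutative cocommutative Hopf algebra $H$ of chord diagrams modulo $4$-term relations, these two properties pin down $\pi(D)$ as the primitive component of $D$ for every chord diagram~$D$.

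First I would rewrite the formula in terms of ordered partitions: since each unordered partition of $[D]$ into $i$ nonempty blocks corresponds to exactly $i!$ ordered partitions, the definition reads
$$\pi(D)=\sum_{i=1}^{|[D]|}\frac{(-1)^{i-1}}{i}\sum_{\substack{([D_1],\dots,[D_i])\\ \text{ordered, nonempty}}}D_1\cdots D_i.$$
Setting $\overline{\mathrm{id}}:=\mathrm{id}_H-\eta\epsilon$ (the augmentation projection), this identifies $\pi$ as the convolution logarithm
$$\pi=\sum_{i\ge 1}\frac{(-1)^{i-1}}{i}\,\overline{\mathrm{id}}^{*i}=\log^*(\mathrm{id}_H)$$
in $\End(H)$; the sum terminates on every chord diagram of finite order.

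Property (a) is then immediate: if $D$ is primitive, the iterated reduced coproduct $\overline{\Delta}^{(i-1)}(D)$ vanishes whenever $i\ge 2$, so only the $i=1$ summand contributes and $\pi(D)=D$. For property (b), I would invoke the Cartier--Quillen-type correspondence, which in a commutative Hopf algebra identifies the convolution exponential as a bijection between the space of $\epsilon$-derivations (linear maps $f$ satisfying $f(ab)=f(a)\epsilon(b)+\epsilon(a)f(b)$) and the group of algebra endomorphisms of $H$ under convolution. Since $\mathrm{id}_H$ is tautologically an algebra endomorphism, its convolution logarithm $\pi$ is an $\epsilon$-derivation, whence $\pi(ab)=0$ for any $a,b\in H^+$.

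The main obstacle is the $\epsilon$-derivation property itself, i.e.\ the verification that $\log^*$ of an algebra homomorphism is a derivation. A direct attack expands $\pi(ab)$ using the defining series for $\log^*$, splits each iterated reduced coproduct $\overline{\Delta}^{(i-1)}(ab)$ via multiplicativity of $\mathrm{id}_H$ together with commutativity of $H$ into contributions indexed by ordered partitions of $[a]\sqcup[b]$, and cancels all mixed contributions that involve chords from both $[a]$ and $[b]$ within a single block. The bookkeeping of signs and factorials in this cancellation is the genuinely delicate step; a more conceptual alternative is to verify the easier fact that $\exp^*$ sends $\epsilon$-derivations to algebra endomorphisms and then use uniqueness of $\log^*$.
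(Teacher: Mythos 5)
The paper offers no proof of this theorem --- it is quoted from \cite{lando2000hopf,schmitt1994incidence} --- so your proposal is measured against the standard argument in those sources, and it is essentially that argument: identify the right-hand side with the convolution logarithm $\log^*(\mathrm{id}_H)=\sum_{i\ge1}\tfrac{(-1)^{i-1}}{i}\,\overline{\mathrm{id}}^{\,*i}$, then characterize the Milnor--Moore projection by its values on $\BC\oplus P\oplus(H^+)^2$. Your bookkeeping is right: connectedness forces every tensor factor surviving $\overline{\mathrm{id}}^{\otimes i}\circ\Delta^{(i-1)}$ to be a nonempty chord subdiagram, the passage from unordered to ordered partitions produces the factor $i!\cdot\tfrac{(-1)^{i-1}}{i}=(-1)^{i-1}(i-1)!=-(-1)^i(i-1)!$ matching the displayed signs, property (a) is immediate from $\overline{\Delta}(D)=0$ for primitive $D$, and (together with the trivial check $\pi(1)=0$, which you should state) properties (a) and (b) do pin down $\pi$ as the projection onto $P$ along $\BC\oplus(H^+)^2$.

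The one substantive issue is that property (b) --- the entire content of the theorem, really --- is only reduced to the lemma that $\log^*$ of an algebra endomorphism of a commutative Hopf algebra is an $\epsilon$-derivation, and your second route for that lemma is stated too loosely. Knowing that $\exp^*$ carries $\epsilon$-derivations to algebra endomorphisms, plus ``uniqueness of $\log^*$'', only tells you that $\log^*(\mathrm{id}_H)$ is an $\epsilon$-derivation \emph{if you also know that $\mathrm{id}_H$ lies in the image of $\exp^*$ restricted to $\epsilon$-derivations}; that surjectivity statement is usually proved by a separate degree-by-degree induction. The cleanest way to close the gap in the present setting is to use the Milnor--Moore isomorphism $H\cong S(P)$ you are already invoking: let $p$ be the canonical projection onto $P$ along $\BC\oplus(H^+)^2$, which is visibly an $\epsilon$-derivation, and compute directly that $\exp^*(p)=\mathrm{id}_H$ (for a product $x_1\cdots x_n$ of primitives, $p^{*k}$ kills all ordered partitions except the $n!$ partitions into singletons, so $\exp^*(p)(x_1\cdots x_n)=\tfrac{1}{n!}\,n!\,x_1\cdots x_n$). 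Since $\log^*$ inverts $\exp^*$ on maps vanishing on $1$, this yields $\log^*(\mathrm{id}_H)=p$ at once and makes steps (a) and (b) unnecessary as separate verifications. Your first route (direct cancellation of mixed blocks) also works but, as you note, the sign and factorial bookkeeping there is genuinely delicate; I would not leave it as a sketch.
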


In particular, the chord diagrams~$K_n$ generate a graded Hopf subalgebra in
the Hopf algebra of chord diagrams (since any chord subdiagram of~$K_n$ is $K_k$, for some~$k$).
Rewriting the formula for the projection for the exponential generating series $1+\sum_{n=1}^\infty{K_n}\frac{x^n}{n!}$
we obtain

\begin{corollary}
The generating series for the projections $\pi(K_n)$  to the subspace of primitive elements is given by the formula
\begin{equation}
	\sum_{n=1}\pi(K_n)\frac{x^n}{n!} = \log\left(1+\sum_{n=1}K_n\frac{x^n}{n!}\right)  \label{eq:projection}
\end{equation}
\end{corollary}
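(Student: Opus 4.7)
The plan is to match coefficients of $x^n/n!$ on both sides of the proposed identity, using the explicit formula for $\pi$ from the theorem together with the observation (already made in the text) that every chord subdiagram of $K_n$ is of the form $K_k$ for some $k$.

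First, I would expand the right-hand side via the Mercator series $\log(1+f)=\sum_{i\geq 1}(-1)^{i-1}f^i/i$ with $f=\sum_{n\geq 1}K_n x^n/n!$. The standard multinomial rule for exponential generating functions yields
$$[x^n/n!]\,f^i \;=\; \sum_{\substack{k_1+\cdots+k_i=n\\ k_j\geq 1}}\binom{n}{k_1,\ldots,k_i}K_{k_1}\cdots K_{k_i},$$
so the coefficient of $x^n/n!$ in $\log(1+f)$ equals $\sum_{i=1}^{n}\tfrac{(-1)^{i-1}}{i}$ times this sum, the upper limit $n$ coming from the constraint $k_j\geq 1$.

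Next, I would specialise the formula from the theorem to $D=K_n$. Since each block $D_j$ is itself $K_{|D_j|}$, it becomes
$$\pi(K_n)=K_n-\sum_{i=2}^{n}(-1)^i(i-1)!\sum_{\substack{\bigsqcup_{j}[D_j]=[K_n]\\ [D_j]\neq\emptyset}}\prod_{j=1}^{i}K_{|D_j|}.$$
An unordered set-partition of $[K_n]$ into $i$ nonempty blocks is labelled by exactly $i!$ distinct ordered partitions (the blocks are pairwise distinct as subsets, being disjoint and nonempty), and for a fixed composition $(k_1,\ldots,k_i)$ the number of ordered partitions with those block sizes is $\binom{n}{k_1,\ldots,k_i}$. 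Hence
$$\sum_{\substack{\bigsqcup_{j}[D_j]=[K_n]\\ [D_j]\neq\emptyset}}\prod_{j=1}^{i}K_{|D_j|} \;=\; \frac{1}{i!}\sum_{\substack{k_1+\cdots+k_i=n\\ k_j\geq 1}}\binom{n}{k_1,\ldots,k_i}K_{k_1}\cdots K_{k_i}.$$

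Finally, combining these with $-(-1)^i=(-1)^{i-1}$ and $(i-1)!/i!=1/i$, and absorbing the leading $K_n$ as the $i=1$ term, the expansion of $\pi(K_n)$ matches $[x^n/n!]\log(1+f)$ term-by-term, so passing to the full generating series gives the claimed identity. The only real bookkeeping obstacle is the conversion between unordered set-partitions and ordered compositions via the symmetry factor $i!$; once that is in place, the two combinatorial expansions coincide and the corollary follows.
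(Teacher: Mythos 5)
Your argument is correct and is exactly the verification the paper leaves implicit: the paper simply says the corollary follows by ``rewriting'' the projection formula of the preceding theorem for the exponential generating series, and your coefficient-matching — specializing the theorem to $D=K_n$, using that every subdiagram of $K_n$ is a $K_k$, and converting unordered set-partitions to ordered ones via the factor $i!$ so that $(-1)^{i-1}(i-1)!/i! = (-1)^{i-1}/i$ reproduces the Mercator series — is precisely that rewriting, carried out in detail.
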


Now, knowing the values of the $\GL$ weight system on the diagrams~$K_n$ for
$n=1,2,\dots,7$, we easily obtain the values $\bar{w}_\GL=w_\GL\circ\pi$
on their projections to primitives:

\begin{result}
	\begin{align*}
		\bar{w}_{\GL}(K_2)&=-N C_2+C_1^2\\
		\bar{w}_{\GL}(K_3)&=2 N^2 C_2-2 N C_1^2\\
		\bar{w}_{\GL}(K_4)&=-6 C_2 N^3+(6 C_1^2-2 C_3) N^2+(6 C_1 C_2-2 C_2+2 C_4) N-4 C_1^3+2 C_1^2+6 C_2^2-8 C_1 C_3\\
		\bar{w}_{\GL}(K_5)&=24 C_2 N^4+(24 C_3-24 C_1^2) N^3+(-72 C_1 C_2+32 C_2-24 C_4) N^2\\
							&\phantom{=}+(48 C_1^3-32 C_1^2+96 C_3 C_1-72 C_2^2) N\\
		\bar{w}_{\GL}(K_6)&=-120 C_2 N^5+(120 C_1^2-240 C_3) N^4+(720 C_1 C_2-416 C_2+224 C_4) N^3\\
							&\phantom{=}+(-480 C_1^3+416 C_1^2-896 C_3 C_1+792 C_2^2-88 C_3+32 C_5) N^2\\
							&\phantom{=}+(-240 C_2 C_1^2+264 C_2 C_1-160 C_4 C_1-72 C_2+64 C_2 C_3+88 C_4-16 C_6) N\\
							&\phantom{=}+120 C_1^4-176 C_1^3+72 C_1^2-192 C_1 C_2^2+264 C_2^2+160 C_3^2+256 C_1^2 C_3\\
							&\phantom{=}-352 C_1 C_3-240 C_2 C_4+96 C_1 C_5\\	
		\bar{w}_{\GL}(K_7)&=720 C_2 N^6+(2400 C_3-720 C_1^2) N^5+(-7200 C_1 C_2+5264 C_2-1856 C_4) N^4\\
							&\phantom{=}+(4800 C_1^3-5264 C_1^2+7424 C_3 C_1-9168 C_2^2+3392 C_3-1088 C_5) N^3\\
							&\phantom{=}+(7200 C_2 C_1^2-10176 C_2 C_1+5440 C_4 C_1+3456 C_2-2176 C_2 C_3-3392 C_4+544 C_6) N^2\\
							&\phantom{=}+(-3600 C_1^4+6784 C_1^3-8704 C_3 C_1^2-3456 C_1^2+6528 C_2^2 C_1+13568 C_3 C_1\\
							&\phantom{=}-3264 C_5 C_1-10176 C_2^2-4096 C_3^2+6816 C_2 C_4) N+1344 C_2^3-2688 C_1 C_2 C_3+1344 C_1^2 C_4\\	
	\end{align*}
\end{result}

In the basis $p_1,p_2,\dots$ of shifted power series, these formulas look simpler:

\begin{result}
	\begin{align*}
		\bar{w}_{\GL}(K_2)&=-Np_2+p_1^2\\
		\bar{w}_{\GL}(K_3)&=2 N^2 p_2-2 N p_1^2\\
		\bar{w}_{\GL}(K_4)&=-7 N^3 p_2+8 N^2 p_1^2+N \left(2 p_4-p_2\right)+6 p_2^2-8 p_1 p_3\\
		\bar{w}_{\GL}(K_5)&=36 N^4 p_2-48 N^3 p_1^2+20 N^2 p_2+N \left(-8 p_1^2-72 p_2^2-24 p_4\right)+96 p_1 p_3\\
    \bar{w}_{\GL}(K_6)&=-243 N^5 p_2+376 N^4 p_1^2+N^3 \left(252 p_4-334 p_2\right)+N^2 \left(232 p_1^2-1088 p_3 p_1+864 p_2^2\right)\\
                      &\phantom{=}+N \left(-96 p_2 p_1^2-31 p_2+68 p_4-16 p_6\right)+48 p_1^4+144 p_2^2+160 p_3^2-192 p_1 p_3-240 p_2 p_4+96 p_1 p_5\\
    \bar{w}_{\GL}(K_7)&=2022 N^6 p_2-3580 N^5 p_1^2+N^4 \left(5556 p_2-2808 p_4\right)+N^3 \left(-5336 p_1^2+13280 p_3 p_1-11280 p_2^2\right)\\
                      &\phantom{=}+N^2 \left(2976 p_2 p_1^2+1862 p_2-2712 p_4+544 p_6\right)\\
                      &\phantom{=}+N \left(-1488 p_1^4-524 p_1^2+8800 p_3 p_1-3264 p_5 p_1-6768 p_2^2-4096 p_3^2+6816 p_2 p_4\right)\\
                      &\phantom{=}+1344 p_2^3-2688 p_1 p_2 p_3+1344 p_1^2 p_4\\
	\end{align*} 	
\end{result}

\end{document}